\theoremstyle{plain}
\newtheorem{thm}{Theorem}[section]
\newtheorem{lem}[thm]{Lemma}
\newtheorem{cor}[thm]{Corollary}
\theoremstyle{definition}
\newtheorem{de}[thm]{Definition}
\newtheorem{rem}[thm]{Remark}
\newtheorem{exam}[thm]{Example}
\newcommand{\Z}{\mathbb{Z}}
\newcommand{\R}{\mathbb{R}}
\newcommand{\N}{\mathbb{N}}
\newcommand {\ol}{\overline}
\newcommand{\ep}{\varepsilon}
\newcommand{\lra}{\longrightarrow}
\newcommand{\id}{\mathrm{id}}
\DeclareMathOperator{\diam}{diam}
\DeclareMathOperator{\Card}{Card}
\DeclareMathOperator{\Int}{Int}
\DeclareMathOperator{\orb}{orb}
\begin{document}
\title{Levels of generalized expansiveness}

\author{Jie Li}
\author{Ruifeng Zhang}
\address[J.~Li]{Wu Wen-Tsun Key Laboratory of Mathematics, USTC, Chinese Academy of Sciences and
School of Mathematics, University of Science and Technology of China,
Hefei, Anhui, 230026, P.R. China}
\email{jiel0516@mail.ustc.edu.cn}
\address[R.~Zhang] {School of Mathematics, Hefei University of Technology, Hefei, Anhui, 230009, P.R. China}
\email{rfzhang@mail.ustc.edu.cn}
\subjclass[2010]{37B05, 37B45, 54H20}
\keywords{$n$-expansive homeomorphism, $\aleph_0$-expansive homeomorphism, expansive homeomorphism, derived set, van der Waerden depth}

\begin{abstract}
We study a class of generalized expansive dynamical systems for which at most countable orbits can be accompanied by an arbitrary given orbit. Examples of different levels of generalized expansiveness are constructed.

When the dynamical system is countable, a characterization of $n$-expansiveness is given for any natural number $n$, and as a consequence examples of dynamical systems with van der Waerden depth equal to any given countable ordinal are demonstrated, which solves open questions existing in the literature.
\end{abstract}

\date{\today}

\maketitle

\section{Introduction}

The classical term of \textit{unstable homeomorphism} ( now known as \textit{expansiveness}) first introduced by Utz in \cite{Ut50}, which is used to study the dynamical behavior saying roughly that every orbit can be accompanied by only one orbit with some certain constant.
It is clear that expansiveness implies the notion of sensitivity, which is the kernel in the definition of Devaney's chaos \cite{De89}. Hence expansiveness property involves a large class of dynamical systems exhibiting chaotic behavior, and nowadays an extensive literature has been developed on this property and its generalizations, see \cite{Ar14,AC14,APV13,Ei66,Ka93, KP99, KR69,Ma79,Mo12,MS10,Re65,Ut50,Wa00} and references therein for more knowledge.

\medskip
Among all the generalizations, the notion of \textit{$n$-expansiveness} originally introduced in \cite{Mo12} is an interesting one. Roughly speaking it loosens restriction to every orbit allows at most $n$ companion orbits with a certain constant. Note that the notion of positive $n$-expansiveness can be similarly defined when positive orbits are considered instead. Then the question that whether these generalized expansive systems can share the properties of the classical ones or not, was addressed naturally. It turns out that both positive and negative answers were provided in
\cite{Mo12}, and one particular result is that there are infinite compact metric spaces carrying positively $n$-expansive homeomorphisms for some $n\in\N$ (see \cite[Theorem 4.1]{Mo12}), which differ from the positively expansive ones.

Another natural question posed in \cite{Mo12} is that whether there are  examples of compact metric spaces admitting fixed level of positively generalized expansive homeomorphisms, i.e. positively $n$- expansive homeomorphisms that are not positively $n-1$-expansive for some integer $n\ge 2$. Note that Morales partially solved this question by showing this is true for $n=2^k\ (k\in\N)$ (see \cite[Propostion 3.4]{Mo12}). By the same spirit we can ask this classification question for all the generalized expansiveness. In \cite{APV13} the authors gave an example of a $2$-expansive homeomorphism on surface which is not expansive, and the general examples are still open. It is worth mentioning that A.~Artigue \cite{Ar14} recently introduced another variant notion of expansiveness, say \textit{$(m,l)$-expansiveness} for given integer number $m> l\ge 1$, which presents a fine division among $n$-expansiveness (see \cite[Table 1]{Ar14} for basic hierarchy), but the examples to distinguish all different hierarchies are not available too.

According to the cardinality of companion orbits, Artigue and Carrasco-Olivera in \cite{AC14} further generalize expansiveness to \textit{$\aleph_0$-expansiveness}, where $\aleph_0$ is the first countable ordinal number, and they proved that $\aleph_0$-expansive homeomorphism is equivalent to another form of generalized expansive homeomorphism in the measurable sense (see \cite[Theorem 2.1]{AC14}).

\medskip
In this paper for simplicity we introduce the notion of \textit{essential $n$-expansiveness} (resp. \textit{essential $\aleph_0$-expansiveness}) to express $n$- but not $n-1$-expansiveness (resp. countable but not finite expansiveness), and the positively essential ones are similar to introduce. One can turn to Section \ref{sect:Def} for the precise definitions and their basic properties. In the sequential Section \ref{sect:LevPosExpan} and Section \ref{sect:LevExpan}, examples of all different levels of the generalized positive expansiveness and expansiveness are given, which completely solve the question left in \cite{Mo12} (see Theorem \ref{thm:EssPosNExpan} and Theorem \ref{thm:EssNExpan}). Among other things, when the space considered has countable cardinality, it turns out that there is no compact metric space carrying positively $n$-expansive homeomorphism for any $n\in\N$ (see Theorem \ref{thm:CountPosNExpan}), which extends the classical result in some sense (compare with \cite{KR69, MS10} and \cite[Theorem 4.1]{Mo12}); moreover, under the countable assumption we can also develop a characterization of $n$-expansiveness (see Theorem \ref{thm:NExpanChar}), and note that it is a natural generalization of Kato and Park \cite[Theorem 2.2]{KP99}.

Non-wandering points play an important role in the study of dynamical systems. Parallel to this classical theory, D.~Kwietniak \textit{et al} in \cite{KLOX15} introduced the notions of \textit{multi-non-wandering point} and the corresponding \textit{van der Waerden center} and \textit{depth}. In this paper as a corollary of Theorem \ref{thm:NExpanChar} we demonstrate that the van der Waerden depth is a countable ordinal and for every countable ordinal $\alpha$ there exists a compact metric system with van der Waerden depth equal to $\alpha$ (see Corollary \ref{cor:Van}). It answers positively a conjecture left open in \cite{KLOX15}.

\section{Definitions and basic properties}\label{sect:Def}

In this paper a \textit{topological dynamical system} (abbr. t.d.s.) is a pair $(X, T)$, where $X$ is a compact metric space and $T\colon X \to X$ is a homeomorphism from $X$ into itself. When discussing the positive notions, we may loosen $T$ to a continuous surjective map. Also, throughout this paper we denote $\N$, $\Z_+$, $\Z$ and $\R$ by the sets of positive integers,
nonnegative integers, integers and real numbers, respectively.

\medskip
Let $(X,T)$ be a compact metric t.d.s. with metric $d$. Fix $\delta>0$, we put
$$
\Gamma_\delta[x,T]  =\{y\in X\colon d(T^n x, T^n y)\le\delta, \forall n\in \Z\}=\{x\}
$$
and
$$
\Phi_\delta[x,T]  =\{y\in X\colon d(T^n x, T^n y)\le\delta, \forall n\in \Z_+\}=\{x\}
$$
when $T$ is not required to be a homeomorphism. We often write $\Gamma_\delta[x]$ or $\Phi_\delta[x]$ when the acting map $T$ is clear from the context.

\begin{de}[\cite{Ut50,Ei66}]\label{De:Expa}
A homeomorphism (resp. continuous surjective map) $T$ is said to be \textit{expansive} (resp. \textit{positively expansive}) if there is an \textit{expansive constant} $\delta>0$ for $T$ such that for every $x\in X$, $\Gamma_\delta[x]=\{x\}$ (resp. $\Phi_\delta[x]=\{x\}$).
\end{de}

In \cite{Mo12} Morales first introduced the notion of \textit{$n$-expansiveness}, which is a natural generalization of the usual expansiveness.

\begin{de}\label{de:nExpa}
Let $n\in\N$. A homeomorphism (resp. continuous surjective map) $T$ is said to be \textit{$n$-expansive} (resp. \textit{$n$-positively expansive}) if there is an \textit{$n$-expansive constant} $\delta>0$ for $T$ such that for every $x\in X$, $\Gamma_\delta[x]$ (resp. $\Phi_\delta[x]$) has at most $n$ elements.
\end{de}

Clearly $1$-expansiveness is just the classical expansiveness. Now by $\aleph_0$ denote the first countable cardinality. In the same spirit Artigue and Carrasco-Olivera \cite{AC14} extend the expansiveness to the following case:

\begin{de}\label{de:CountExap}
A homeomorphism (resp. continuous surjective map) $T$ is said to be \textit{$\aleph_0$-expansive} (resp. \textit{$\aleph_0$-positively expansive}) if there is an \textit{$\aleph_0$-expansive constant} $\delta>0$ for $T$ such that for every $x\in X$, $\Gamma_\delta[x]$ (resp. $\Phi_\delta[x]$) has at most countable elements.
\end{de}

\begin{de}\label{de:essential}
we call a homeomorphism $T$ is \textit{essentially $n$/$\aleph_0$-expansive} (resp. \textit{essentially positively $n$/$\aleph_0$-expansive}) if it is $n$/$\aleph_0$-expansive (resp. positively $n$/$\aleph_0$-expansive) and for any $\delta>0$ there is at least one point $x$ such that the cardinality of $\Gamma_\delta[x]$ (resp. $\Phi_\delta[x]$) is $n/\aleph_0$ .
\end{de}

It is easy to see that a homeomorphism $T$ is essentially $n$-expansive (resp. $\aleph_0$-expansive) if and only if it is $n$- but not $n-1$-expansive (resp. countable but not finite expansive). The equivalence for the corresponding positive cases are similar to achieve.

\begin{rem}\label{rem:DefExpa}
We have the following facts:
\begin{enumerate}
  \item\label{rem:DefExpa:1} Another way to give the above concepts is to generalize the notion of generator introduced by Keynes and Robertson \cite{KR69}. That is, $T$ is $n/\aleph_0$-expansive if and only if there is a finite open cover $\alpha$ of $X$ for $T$ such that if for every bisequence $\{A_n\}_{n=-\infty}^{\infty}$ of members of $\alpha$, $\Card (\cap_{n=-\infty}^\infty T^{-n} \ol{A_n})$ is at most $n/\aleph_0$. Here $\Card(\cdot)$ means the cardinality of the set.

      From this definition we can easily see that $n/\aleph_0$-expansiveness is a topological conjugacy invariant, and it is independent of the metric as long as the metric induces the topology of $X$ (although the $n/\aleph_0$-expansive constant does change).
  \item\label{rem:DefExpa:2} It is clear that $n$-expansiveness implies $\aleph_0$-expansiveness for any $n\in\N$ and $n$-expansiveness implies $m$-expansiveness for any $m\ge n\in\N$.
  \item\label{rem:DefExpa:3} A subsystem of an (essentially) (resp. positively) $n/\aleph_0$-expansive t.d.s. is  (resp. positively) $n/\aleph_0$-expansive.
\end{enumerate}
\end{rem}

Let $(X,T)$ be a t.d.s. We say $x\in X$ is a \textit{periodic point}  if $T^n x=x$ for some $n\in\N$, and  a \textit{fixed point} if such $n=1$.  Denote by $\textrm{Per}(X,T)$ (resp. $\textrm{Fix(X,T)}$) the set of all periodic (resp. fixed) points.
Now put
$$
\alpha(x)=\{y\in X\colon \lim_{i\to -\infty} T^{n_i} x\to y\}\ \mbox{and}\ \omega(x)=\{z\in X\colon \lim_{i\to +\infty} T^{n_i} x\to z\}
$$
Call some point $x$ has \textit{converging semi-orbits under $T$} if both $\alpha(x)$ and $\omega(x)$ consist of a single point. Put $\textrm{CS}(X,T)$ as the collection of all points having converging semi-orbits under $T$.

It is well known that under the classical expansiveness assumption $\textrm{Fix(X,T)}$ is finite (see \cite[Theorem 5.26]{Wa00}) and $\textrm{Per}(X,T)$ and $\textrm{CS}(X,T)$ are countable ( for instance \cite[Theorem 3.1]{Ut50} and \cite[Theorem 1]{Re65}, respectively). Now we improve these results to the generalized expansiveness.

\begin{thm}\label{thm:fact}
We have the following generalizations:
\begin{enumerate}
  \item\label{fact:1} If $k\neq 0$ then $T$ is (resp. essentially) (resp. positively) $n/\aleph_0$-expansive if and only if so is $T^k$.
  \item\label{fact:2} If $T$ is $n$-expansive, then $\emph{Fix}(X,T)$ is finite and $\emph{Per}(X,T)$ are countable,
  \item\label{fact:3} If $T$ is $\aleph_0$-expansive, then $\emph{Fix}(X,T)$, $\emph{Per}(X,T)$ and $\emph{CS}(X,T)$ are countable.
\end{enumerate}
\end{thm}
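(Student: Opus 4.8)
The plan is to handle the three items in order, reducing each generalized-expansiveness statement to the classical one whenever possible.

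\medskip
\textbf{Part \eqref{fact:1}.} First I would observe that the key point is the elementary set-theoretic identity relating the dynamical balls of $T$ and $T^k$. Assume $k>0$ (the case $k<0$ follows since $\Gamma_\delta[x,T]=\Gamma_\delta[x,T^{-1}]$ and $(T^k)^{-1}=(T^{-1})^k$). Given an $n/\aleph_0$-expansive constant $\delta$ for $T$, I would use uniform continuity to choose $\delta'>0$ so small that $d(a,b)\le\delta'$ implies $d(T^j a,T^j b)\le\delta$ for all $0\le j<k$; then $\Gamma_{\delta'}[x,T^k]\subseteq\Gamma_\delta[x,T]$, so the cardinality bound transfers from $T$ to $T^k$. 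Conversely, if $\delta$ is an $n/\aleph_0$-expansive constant for $T^k$, then trivially $\Gamma_\delta[x,T]\subseteq\Gamma_\delta[x,T^k]$, so $\delta$ works for $T$ as well. For the positive versions one replaces $\Gamma$ by $\Phi$ and $\Z$ by $\Z_+$ verbatim. Finally, the ``essential'' qualifier is preserved because the inclusions above are between balls of comparable radii, so if for every $\eta>0$ some point has $\Gamma_\eta[\cdot,T]$ of cardinality exactly $n/\aleph_0$, the same holds for $T^k$ after adjusting $\eta$, and vice versa; I would spell this out using the equivalence noted after Definition \ref{de:essential}.

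\medskip
\textbf{Part \eqref{fact:2}.} For finiteness of $\mathrm{Fix}(X,T)$: if $\delta$ is an $n$-expansive constant and $x$ is fixed, then every $y$ with $d(x,y)\le\delta$ lies in $\Gamma_\delta[x,T]$ (since $T^m x=x$ and $d(T^m x,T^m y)$ need not be small — wait, that is false in general), so instead I would argue as follows: cover $X$ by finitely many balls of radius $\delta/2$; any such ball containing at least one fixed point can contain at most $n$ of them, because two fixed points $x,y$ in a common $\delta/2$-ball satisfy $d(T^m x,T^m y)=d(x,y)\le\delta$ for all $m$, hence $y\in\Gamma_\delta[x,T]$. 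Therefore $|\mathrm{Fix}(X,T)|\le nN$ where $N$ is the number of balls. For countability of $\mathrm{Per}(X,T)$: since $T$ is $n$-expansive, so is $T^k$ for each $k\ge1$ by Part \eqref{fact:1}, whence $\mathrm{Fix}(X,T^k)$ is finite; and $\mathrm{Per}(X,T)=\bigcup_{k\ge1}\mathrm{Fix}(X,T^k)$ is a countable union of finite sets.

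\medskip
\textbf{Part \eqref{fact:3}.} Countability of $\mathrm{Fix}$ and $\mathrm{Per}$ follows from the same scheme, using that an $\aleph_0$-expansive $T$ has $T^k$ also $\aleph_0$-expansive and that a single $\delta/2$-ball now meets $\mathrm{Fix}$ in at most countably many points. The substantive case is $\mathrm{CS}(X,T)$. Here I would adapt Reddy's argument \cite{Re65}: fix an $\aleph_0$-expansive constant $\delta$ and let $x\in\mathrm{CS}(X,T)$ with $\alpha(x)=\{p\}$, $\omega(x)=\{q\}$, where $p,q\in\mathrm{Fix}(X,T)$ are forced by a standard argument. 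The orbit of $x$ eventually stays within $\delta/2$ of $q$ in forward time and within $\delta/2$ of $p$ in backward time, so there are integers $a\le b$ with $d(T^n x,q)\le\delta/2$ for $n\ge b$ and $d(T^n x,p)\le\delta/2$ for $n\le a$. Two points $x,x'$ of $\mathrm{CS}$ with the same $(p,q)$, the same $(a,b)$, and with $d(T^n x,T^n x')\le\delta$ for the finitely many $n$ in $[a,b]$ — which can be arranged by a further finite covering of $X^{b-a+1}$ — automatically satisfy $d(T^n x,T^n x')\le\delta$ for \emph{all} $n\in\Z$ (using the triangle inequality via $p$ and $q$ outside $[a,b]$), hence $x'\in\Gamma_\delta[x,T]$. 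Thus $\mathrm{CS}(X,T)$ is partitioned into countably many classes (indexed by $p,q\in\mathrm{Per}$, by $a,b\in\Z$, and by a finite cover), each contained in a single $\Gamma_\delta[\cdot]$ and hence at most countable; so $\mathrm{CS}(X,T)$ is countable.

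\medskip
The main obstacle is the bookkeeping in Part \eqref{fact:3}: one must be careful that the constant $\delta/2$ chosen for the ``tail'' estimates and the mesh of the auxiliary cover on the finite window $[a,b]$ combine, via the triangle inequality, to give the clean bound $d(T^n x,T^n x')\le\delta$ on all of $\Z$ — in particular checking the boundary indices $n=a$ and $n=b$ and verifying that no point is lost when $\alpha(x)=\omega(x)$. Everything else is either a direct quotation of the classical proofs with ``$1$'' replaced by ``$n$'' or ``$\aleph_0$'', or the uniform-continuity comparison already used in Part \eqref{fact:1}.
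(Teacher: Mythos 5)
Your proposal is correct and follows essentially the same route as the paper: the two dynamical-ball inclusions via uniform continuity for part (1), the finite $\delta/2$-cover bounding fixed points per ball together with $\mathrm{Per}=\bigcup_k\mathrm{Fix}(X,T^k)$ for part (2), and for part (3) the decomposition of $\mathrm{CS}(X,T)$ into countably many pieces indexed by a pair of fixed points and a finite time window, refined by a finite cover so that any two points in one piece lie in a common $\Gamma_\delta[\cdot]$. The only blemishes are cosmetic: the self-corrected false start in part (2) and writing $p,q\in\mathrm{Per}$ where you mean $p,q\in\mathrm{Fix}$ (which is what the countable indexing actually requires).
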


\begin{proof}
\eqref{fact:1} We only prove that $T$ is $n/\aleph_0$-expansive if and only if so is $T^k$, and the other cases are similar. Since $T$ is continuous, there is $\ep>0$ such that whenever $d(x,y)<\ep$ then $d(T^i x, T^i y)<\delta$ for all $-k\le i\le k$ . Thus $\Gamma_\ep[x,T^k]\subset \Gamma_\delta[x,T]$ for all $x\in X$, which yields the necessity. On the other hand, we clearly have $\Gamma_\delta[x,T]\subset \Gamma_\delta[x,T^k]$, so the sufficiency holds.

\eqref{fact:2} Note that $\textrm{Per}(X,T)=\bigcup_{k\in\N} \textrm{Fix}(X,T^k)$, by \eqref{fact:1} it suffices to show $\textrm{Fix}(X,T)$ is finite whenever $T$ is $n$-expansive. Choose the $n$-expansive constant $\delta>0$ for $T$. Let $x_1,x_2,\dots,x_m\in X$ be such that $X=\bigcup_{i=1}^m B_{\delta/2}(x_i)$ due to compactness. If the contrary there are infinitely many fixed points $y_1,y_2,\dots$ in $B_{\delta/2}(x_{i_0})$ for some $1\le i_0\le m$. Put $A=\{y_1,y_2,\dots\}$. It is clear that for any $y_i\neq y_j\in A$ we have $d(T^l y_i, T^l y_j)\le \delta$ for each $l\in\Z$. However $\Card (A)$ is infinite which contradicts the definition of $\delta$. The proof ends.

\eqref{fact:3} Assume that $T$ is $\aleph_0$-expansive with $\aleph_0$-expansive constant $\delta>0$. First to claim that $\textrm{Fix}(X,T)$ and $\textrm{Per}(X,T)$ are countable. Similar as before it remains to prove $\textrm{Fix}(X,T)$ is countable whenever $T$ is $\aleph_0$-expansive. If not we apply the same manner to obtain a ball with radius $\delta/2$ containing uncountable fixed points, which yields a contradiction with the $\aleph_0$-expansive constant $\delta$. This proves the claim.

Now we show $\textrm{CS}(X,T)$ is also countable. Enumerate the countable set of fixed points as $z_1,z_2,\dots$. Consider the decomposition that
$$
\textrm{CS}(X,T)=\bigcup\nolimits_{i,j,k\in\N} \textrm{CS}(i,j,k),
$$
where
$$
\textrm{CS}(i,j,k)=\{x\in X\colon d(T^{-n} x, z_i)\le \delta/2 \mbox{ and } d(T^n x, z_j)\le \delta/2 \mbox{ for all } n\ge k\}.
$$
Clearly $\textrm{CS}(i,j,k)$ is compact. Since the countable union of countable sets is still countable, so if $\textrm{CS}(X,T)$ is uncountable, there are $i_0,j_0,k_0$ such that $\textrm{CS}(i_0,j_0,k_0)$ is uncountable.
On the other hand, by compactness we have
$$
\textrm{CS}(i_0,j_0,k_0)=\bigcup\nolimits_{s=1}^{\ t} B_{\ep_s}(x_s),
$$
where
$$
B_{\ep_s}(x_s)=\{y\in X\colon \mbox{ if } d(x_s, y)<\ep_s \mbox{ then } d(T^n x_s, T^n y)\le \delta/2 \mbox{ for all } |n|\le k_0\}.
$$
Thus there is $1\le s_0\le t$ such that $B_{\ep_{s_0}}(x_{s_0})$ contains uncountable elements. It implies that if $y_i\neq y_j\in B_{\ep_{s_0}}(x_{s_0})$ then $d(T^n y_i, T^n y_j)\le \delta$ holds for all $n\in\Z$, a contradiction with the choice of $\delta$. The proof is completed.
\end{proof}

We know that an interval or unit circle carries no expansive homeomorphisms (see for instance \cite{Re65} and \cite[Theorem 5.27]{Wa00}). Now we generalize these results to the case of $\aleph_0$-expansive homeomorphisms.

\begin{cor}\label{cor:nonexist-interval}
There is no $\aleph_0$-expansive homeomorphism of a compact interval.
\end{cor}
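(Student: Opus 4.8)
The plan is to mimic the classical argument that a compact interval admits no expansive homeomorphism, but upgraded to the countable setting using Theorem~\ref{thm:fact}\eqref{fact:3}. Suppose for contradiction that $T$ is an $\aleph_0$-expansive homeomorphism of $I=[0,1]$ with $\aleph_0$-expansive constant $\delta>0$. Since $T$ is a homeomorphism of an interval, it is monotone; replacing $T$ by $T^2$ if necessary (which by Theorem~\ref{thm:fact}\eqref{fact:1} is still $\aleph_0$-expansive) we may assume $T$ is increasing and fixes the endpoints $0$ and $1$. For an increasing homeomorphism of an interval, every point has converging semi-orbits: for each $x$, the sequence $T^n x$ is monotone in $n$ (lying between consecutive fixed points or between a fixed point and an endpoint), hence converges as $n\to+\infty$ to a fixed point, and similarly $T^{-n}x$ converges as $n\to+\infty$ to a fixed point. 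Thus $\mathrm{CS}(I,T)=I$.

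The next step is to derive the contradiction. By Theorem~\ref{thm:fact}\eqref{fact:3}, $\mathrm{CS}(I,T)$ must be countable; but $\mathrm{CS}(I,T)=I$ is uncountable, a contradiction. Hence no such $T$ exists on $[0,1]$, and the same applies to any compact interval since all compact intervals are homeomorphic and $\aleph_0$-expansiveness is a conjugacy invariant by Remark~\ref{rem:DefExpa}\eqref{rem:DefExpa:1}.

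The only genuinely substantive point is the claim that an increasing homeomorphism of $[0,1]$ has all points in $\mathrm{CS}(I,T)$. The argument is: the fixed point set $\mathrm{Fix}(I,T)$ is closed and nonempty (it contains $0,1$), so its complement is a disjoint union of open intervals $(a,b)$ with $a,b$ fixed; on each such interval $T$ has no fixed point, so $Tx-x$ has constant sign there, making $n\mapsto T^n x$ strictly monotone and bounded, hence convergent, with limit necessarily a fixed point in $\{a,b\}$; the same reasoning applies to $T^{-1}$ for the backward orbit. Points of $\mathrm{Fix}(I,T)$ trivially lie in $\mathrm{CS}(I,T)$. So every $x\in I$ has converging semi-orbits. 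I expect this monotonicity bookkeeping to be the main (though quite mild) obstacle; once it is in place, the countability conclusion of Theorem~\ref{thm:fact}\eqref{fact:3} closes the argument immediately.
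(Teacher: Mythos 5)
Your proof is correct and follows essentially the same route as the paper: pass to $T^2$ to make the homeomorphism orientation-preserving (justified by Theorem~\ref{thm:fact}\eqref{fact:1}), observe that every point then has converging semi-orbits so $\mathrm{CS}([0,1],T)=[0,1]$, and contradict the countability of $\mathrm{CS}$ from Theorem~\ref{thm:fact}\eqref{fact:3}. The only difference is that you spell out the monotonicity argument that the paper dismisses as ``easy to see,'' which is a welcome addition.
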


\begin{proof}
By Theorem \ref{thm:fact}\eqref{fact:1} we can assume that $T$ is orientation-preserving (if necessary replace $T$ by $T^2$). If $T$ is an $\aleph_0$-expansive homeomorphism acting on interval $[0,1]$, then it is easy to see that every point in $[0,1]$ has converging semi-orbit under $T$, which contradicts with Theorem \ref{thm:fact}\eqref{fact:3}.
\end{proof}

\begin{cor}\label{cor:nonexist-cicle}
There is no $\aleph_0$-expansive homeomorphism of an unit circle $S^1$.
\end{cor}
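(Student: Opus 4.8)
Suppose, for contradiction, that the unit circle $S^1$ carries an $\aleph_0$-expansive homeomorphism $T$. By Theorem~\ref{thm:fact}\eqref{fact:1} we may replace $T$ by $T^2$ and thereby assume $T$ is orientation-preserving; fix an $\aleph_0$-expansive constant $\delta>0$ for this $T$ and let $\rho$ be its rotation number. The plan is to imitate the proof of Corollary~\ref{cor:nonexist-interval}, splitting into the cases $\rho$ rational and $\rho$ irrational.

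If $\rho=p/q$ is rational, then, replacing $T$ by $T^q$ (still orientation-preserving and, by Theorem~\ref{thm:fact}\eqref{fact:1}, still $\aleph_0$-expansive), we may assume $\mathrm{Fix}(S^1,T)\neq\varnothing$. Since $\mathrm{Fix}(S^1,T)$ is closed, $S^1\setminus\mathrm{Fix}(S^1,T)$ is a union of pairwise disjoint open arcs, each of which is $T$-invariant because $T$ preserves orientation and fixes their endpoints; on any such arc $T$ is a fixed-point-free increasing homeomorphism, so every interior point converges to one endpoint under $T^n$ as $n\to+\infty$ and to the other as $n\to-\infty$. As fixed points trivially have converging semi-orbits, we conclude $\mathrm{CS}(S^1,T)=S^1$, which is uncountable and contradicts Theorem~\ref{thm:fact}\eqref{fact:3}.

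If $\rho$ is irrational, then $T$ is semi-conjugate to the rotation $R_\rho$ through a monotone degree-one continuous surjection $h\colon S^1\to S^1$ with $hT=R_\rho h$. If $h$ is a homeomorphism, then $T$ is topologically conjugate to $R_\rho$; but $R_\rho$ is an isometry of $S^1$, so $\Gamma_\delta[x,R_\rho]$ is the closed $\delta$-ball about $x$ and hence uncountable, so $R_\rho$ --- and therefore $T$, by the conjugacy invariance noted in Remark~\ref{rem:DefExpa}\eqref{rem:DefExpa:1} --- is not $\aleph_0$-expansive, a contradiction. If $h$ is not injective, choose $\theta_0$ with $h^{-1}(\theta_0)$ of nonempty interior and let $I_0$ be that interior, a non-degenerate open arc; since $hT^n=R_\rho^nh$ and $R_\rho$ has no periodic point, $T^nI_0\cap I_0=\varnothing$ for every $n\neq 0$, so the arcs $\{T^nI_0\}_{n\in\Z}$ are pairwise disjoint and the sum of their lengths is at most the length of $S^1$. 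Hence $\diam(T^nI_0)\to 0$ as $|n|\to\infty$, so $F=\{n\in\Z:\diam(T^nI_0)>\delta\}$ is finite; fixing $c\in I_0$ and using continuity of $T^n$ at $c$ for the finitely many $n\in F$, we obtain a non-degenerate open sub-arc $J\subset I_0$ containing $c$ with $\diam(T^nJ)\le\delta$ for all $n\in F$, and hence, using $T^nJ\subset T^nI_0$ when $n\notin F$, for all $n\in\Z$. Then $J\subset\Gamma_\delta[x_0,T]$ for any $x_0\in J$, an uncountable set, which is the final contradiction.

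I expect the irrational, non-injective (Denjoy) subcase to be the real obstacle. One cannot simply restrict to the unique minimal Cantor subset, because the restriction of $T$ there need not fail to be $\aleph_0$-expansive --- indeed it can be expansive, being of Sturmian type --- so the argument must exploit the wandering arcs of $S^1$ itself, and the delicate step is the extraction of the sub-arc $J$ whose entire orbit has diameter at most $\delta$, which is possible precisely because summability of the lengths $|T^nI_0|$ leaves only finitely many ``bad'' iterates to be tamed by continuity.
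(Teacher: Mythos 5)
Your proof is correct and follows essentially the same route as the paper's: the Poincar\'e semi-conjugacy to a rotation, with the conjugate-to-a-rotation case killed by equicontinuity and the Denjoy case killed by the pairwise disjoint wandering arcs of summable length together with continuity on the finitely many ``bad'' iterates. The only real difference is in ruling out periodic points: the paper tersely reduces to Corollary \ref{cor:nonexist-interval} via the subsystem remark, whereas you argue directly that a rational rotation number forces $\mathrm{CS}(S^1,T)=S^1$, contradicting Theorem \ref{thm:fact}\eqref{fact:3} --- a cleaner and more complete treatment of that case.
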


\begin{proof}
If $T$ is an $\aleph_0$-expansive homeomorphism on unit circle $S^1$, then by Corollary \ref{cor:nonexist-interval} and Remark \ref{rem:DefExpa}\eqref{rem:DefExpa:3} it has no fixed points.

By \cite[Theorem 6.18]{Wa00} there is a continuous surjection $\phi\colon S^1\lra S^1$ and a minimal rotation $S\colon S^1\lra S^1$ such that $\phi\, T=S\,\phi$, and for each $z\in S^1$ the set $\phi^{-1}(z)$ is either a point or closed interval. If each set $\phi^{-1}(z)$ is a point, then $\phi$ is a homeomorphism and $T$ is not $\aleph_0$-expansive because the minimal rotation $S$ is equicontinuous. Assume that for some $z_0$ the set $\phi^{-1}(z_0)$ is a closed interval of positive length. Since $\phi\, T=S\,\phi$, the sets $\{T^{-l}\phi^{-1}(z_0)\colon l\in\Z\}$ are mutually disjoint closed intervals. For any $\delta>0$ we can choose $N$ such that if $|l|\ge N$ the length of $T^{-l}\phi^{-1}(z_0)$ is less than $\delta$. Then by continuity of $T$ we can find a subinterval $A$ of $\phi^{-1}(z_0)$ with length less than some $\ep>0$ such that for any $a_1, a_2\in A$, $d(T^l a_1, T^l a_2)\le \delta$ for all $|l|\le N$. This implies that $\diam T^l A\le \delta$ for all $l\in \Z$. As $\Card (A)$ is uncountable, thus $\delta$ is not an $\aleph_0$-expansive constant for $T$. That is $T$ is not $\aleph_0$-expansive, a contradiction.
\end{proof}

Next we study the relationship between $\aleph_0$-expansive homeomorphisms and dimension. The definition and basic properties of dimension can be found in the book of Hurewicz and Wallman \cite{HW48}. Now we recall the notion of \textit{continuum-wise expansive homeomorphism}, which is another form of generalization first introduced by Kato \cite{Ka93}.

By a \textit{continuum} we mean a compact metric and connected non-degenerated space. A \textit{subcontinuum} is a continuum which is a subset of a space.

\begin{de}\label{de:cw-expan}
Let $(X,T)$ be a t.d.s. The homeomorphism $T$ is \textit{continuum-wise expansive} if there exists a constant $\delta>0$ such that for any nondegenerate subcontinuum $A$ of $X$, there is $n\in\Z$ such that $\diam (T^n A)>\delta$, where $\diam (A)=\sup \{d(a_1,a_2)\colon a_1,a_2\in A\}$.
\end{de}

The following lemma can be easily deduced from definitions, one can also refer to \cite{APV13,AC14}. Here we provide the details for the sake of completeness.

\begin{lem}\label{lem:CoutExp-cwExp}
If $(X,T)$ is $\aleph_0$-expansive then it is continuum-wise expansive.
\end{lem}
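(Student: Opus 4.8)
The plan is to show that any $\aleph_0$-expansive constant $\delta>0$ for $T$ is at the same time a continuum-wise expansive constant. I would argue by contradiction. Suppose $\delta$ fails to witness continuum-wise expansiveness; then there is a nondegenerate subcontinuum $A\subseteq X$ with $\diam(T^nA)\le\delta$ for every $n\in\Z$. Fix a point $a\in A$. For any $b\in A$ and any $n\in\Z$ we have $d(T^na,T^nb)\le\diam(T^nA)\le\delta$, so $b\in\Gamma_\delta[a,T]$. Hence $A\subseteq\Gamma_\delta[a,T]$.

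Next I would invoke the elementary fact that a nondegenerate continuum is uncountable: since $A$ is connected and contains two distinct points $a,a'$, the continuous map $x\mapsto d(a,x)$ sends $A$ onto a connected subset of $\R$ containing $0$ and $d(a,a')>0$, hence onto an interval of positive length, so $\Card(A)$ is uncountable. This contradicts the choice of $\delta$, for which $\Gamma_\delta[a,T]$ is at most countable. Therefore no such $A$ exists, that is, for every nondegenerate subcontinuum $A$ of $X$ there is some $n\in\Z$ with $\diam(T^nA)>\delta$, which is exactly the assertion that $T$ is continuum-wise expansive with constant $\delta$.

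I do not anticipate any real obstacle here: the argument is a direct unwinding of the two definitions, and the only step worth a line of justification is that a nondegenerate continuum has uncountably many points.
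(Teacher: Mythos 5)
Your proof is correct and follows essentially the same route as the paper's: both arguments reduce to the observation that a nondegenerate subcontinuum is uncountable, hence cannot be contained in any $\Gamma_\delta[a,T]$ when $\delta$ is an $\aleph_0$-expansive constant. The only difference is cosmetic (you phrase it as a contradiction and spell out why a nondegenerate continuum is uncountable, which the paper merely asserts).
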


\begin{proof}
By definition if $X$ is $0$-dimensional then $T$ is always continuum-wise expansive. Now assume $\dim X>0$. Let $A$ be any non-degenerate subcontinuum of $X$ and $x\in A$. Then $\Card (A)$ contains uncountable elements by the non-degeneracy. Since $T$ is $\aleph_0$-expansive and assume the $\aleph_0$-expansive constant is $\delta>0$, then there exists a point $y\in A\setminus \Gamma_\delta[x]$. This implies that $d(T^n x, T^n y)>\delta$ for some $n\in\Z$, and then $\diam (T^n A)>\delta$. Thus $T$ is continuum-wise expansive with respect to $\delta$, completing the proof.
\end{proof}

An famous theorem by Ma\~n\`e \cite{Ma79} says that a compact metric space $X$ that admits an expansive homeomorphism $T$ is finite dimensional and every minimal set of $(X,T)$ is $0$-dimensional. Later Kato \cite[Theorem 5.2]{Ka93} proved that this theorem can be improved to the continuum-wise expansiveness case. By Lemma \ref{lem:CoutExp-cwExp} we immediately have the following theorem.

\begin{thm}\label{thm:dimension}
Let $(X,T)$ be an $\aleph_0$-expansive t.d.s. Then $\dim X<\infty$ and every minimal set of $T$ is $0$-dimensional.
\end{thm}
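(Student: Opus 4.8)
The plan is to derive Theorem \ref{thm:dimension} as a direct consequence of the two ingredients already assembled in the excerpt: Lemma \ref{lem:CoutExp-cwExp}, which shows that every $\aleph_0$-expansive t.d.s.\ is continuum-wise expansive, and Kato's improvement of Ma\~n\`e's theorem to the continuum-wise expansive setting (\cite[Theorem 5.2]{Ka93}). Concretely, I would argue as follows. Let $(X,T)$ be an $\aleph_0$-expansive t.d.s.\ with $\aleph_0$-expansive constant $\delta>0$. By Lemma \ref{lem:CoutExp-cwExp}, $T$ is continuum-wise expansive (with the same constant $\delta$). Hence Kato's theorem applies verbatim: it asserts precisely that a compact metric space admitting a continuum-wise expansive homeomorphism is finite dimensional, and that every minimal set of such a system is $0$-dimensional. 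Feeding $(X,T)$ into that theorem yields $\dim X<\infty$ and the $0$-dimensionality of every minimal set, which is exactly the claim.

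A few small points deserve to be spelled out to make the deduction airtight. First, one should note that Lemma \ref{lem:CoutExp-cwExp} covers the degenerate case $\dim X=0$ automatically (there continuum-wise expansiveness is vacuous and $\dim X<\infty$ is trivial), so one may assume $\dim X>0$ without loss of generality before invoking the lemma; the lemma's proof already handles this dichotomy. Second, for the statement about minimal sets, observe that a minimal set $M\subseteq X$ is itself a compact $T$-invariant subspace, so $(M,T|_M)$ is a subsystem; by Remark \ref{rem:DefExpa}\eqref{rem:DefExpa:3}, $(M,T|_M)$ is again $\aleph_0$-expansive, hence continuum-wise expansive by Lemma \ref{lem:CoutExp-cwExp}, and Kato's conclusion that a minimal continuum-wise expansive system is $0$-dimensional applies to $(M,T|_M)$. (Alternatively one simply quotes Kato's theorem as a black box, since it already states the minimal-set conclusion.)

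There is essentially no obstacle here — the theorem is a corollary, and the real work was done in Lemma \ref{lem:CoutExp-cwExp} and in the cited literature. The only thing to be careful about is the logical packaging: making explicit that ``$\aleph_0$-expansive'' is exactly the hypothesis needed to trigger Lemma \ref{lem:CoutExp-cwExp}, and that Kato's theorem is quoted in the strong form that includes the statement on minimal sets (not merely finite-dimensionality of $X$). So the proof I would write is just: ``By Lemma \ref{lem:CoutExp-cwExp}, $(X,T)$ is continuum-wise expansive; now apply \cite[Theorem 5.2]{Ka93}.'' with the one-line remark about subsystems to cover the minimal-set assertion cleanly.
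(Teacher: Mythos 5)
Your proposal is correct and matches the paper's own argument exactly: the paper also deduces the theorem immediately from Lemma \ref{lem:CoutExp-cwExp} together with Kato's continuum-wise version of Ma\~n\`e's theorem (\cite[Theorem 5.2]{Ka93}). The extra remarks you add about the $0$-dimensional case and about minimal sets being subsystems are fine but not needed, since Kato's theorem is quoted in the strong form that already covers minimal sets.
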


\begin{rem}\label{rem:minNExpan-minExpan}
Note that when $T$ is a $2$-expansive homeomorphism defined on a compact boundaryless surface with nonwandering set being the whole surface then $T$ is expansive (see \cite[Theorem A]{APV13}). So we may ask that if any minimal $n/\aleph_0$-expansive t.d.s. is expansive?  If this were done, then coupled with Theorem of Ma\~n\`e we immediately have $X$ is zero-dimensional.
\end{rem}

\section{Levels of positive expansiveness}\label{sect:LevPosExpan}

It is a natural question as to whether a compact metric t.d.s. can admit an essentially positively $n$-expansive homeomorphism for any $n\in\N$. In \cite[Proposition 3.4]{Mo12} Morales gave a partial answer by showing that there is a t.d.s. $(X,T)$ which is positively $2^k$-expansive but not positively $(2^k-1)$-expansive for each $k\in\N$. Motivated by this example we here display a complete solution.

\begin{thm}\label{thm:EssPosNExpan}
There exists an essentially positively $n$-expansive t.d.s. $(X,T)$ for every $n\in\N$.
\end{thm}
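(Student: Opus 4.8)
The plan is to build the example concretely on a countable compact metric space, realizing the required cardinality of ``companion orbits'' through a finite orbit that stays uniformly close to a given one. The natural model is a convergent-sequence type space: take a fixed point $p$ together with a family of orbit segments or bi-infinite orbits that accumulate only at $p$. To force exactly $n$ points in some $\Phi_\delta[x]$, I would attach to a single base point $n-1$ additional points whose forward orbits shadow that of the base point with error bounded by a constant, while making sure that no point has more than $n$ points forward-shadowing it for the relevant $\delta$, and that for every $\delta>0$ one does find a point with $\Phi_\delta[\cdot]$ of cardinality exactly $n$ (this last part gives the word ``essentially'').

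First I would fix the underlying space: something like $X=\{p\}\cup\bigcup_{k\ge 1}\{p\}\cup (\text{a copy of }\Z_+ \text{ or }\Z\text{ of orbit points scaled to converge to }p)$, more precisely a disjoint union (in the one-point-compactification sense) of countably many finite ``bundles,'' the $k$-th bundle consisting of $n$ parallel orbit pieces of length roughly $k$ placed at distance $\sim 1/k$ from $p$ and at mutual distance $\sim c/k$ within the bundle. Second I would define $T$ to be the obvious shift along each orbit piece, sending the last point of a bundle and the point $p$ to $p$ (so $p$ is fixed), and check that $T$ is a homeomorphism of the compact metric space $X$ — here one must be careful that $T$ is continuous at $p$, which forces the ``ends'' of the bundles to limit onto $p$ as well, so the bundles should be bi-infinite or one should be slightly cleverer about how orbit pieces are glued; I expect to use a two-sided construction with each orbit asymptotic to $p$ in both time directions, so that $X$ is genuinely compact and $T$ a homeomorphism. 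Third, for the positive-expansiveness estimate: given any $\delta>0$, only finitely many bundles have diameter exceeding $\delta$, so for all but finitely many $k$ the whole $k$-th bundle lies in a single $\delta$-ball along its entire forward orbit, giving up to $n$ companions; I would choose the geometry so that across different bundles points never $\delta$-shadow each other (their orbits separate), hence $\Phi_\delta[x]$ has at most $n$ elements for every $x$ — establishing positive $n$-expansiveness — and for each $\delta$ some sufficiently ``deep'' bundle furnishes a point $x$ with $\Card\,\Phi_\delta[x]=n$, giving ``essentially.''

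The main obstacle I anticipate is the simultaneous control of two opposing requirements: making the bundles thin enough (so that within a bundle all $n$ orbit pieces genuinely stay within distance $\delta$ of each other for arbitrarily small $\delta$, which needs bundles of unbounded ``closeness'' $c/k\to 0$) while keeping them separated enough from $p$ and from each other (so that no extra accidental $\delta$-shadowing appears, which would push the count above $n$). Concretely, I would parametrize each orbit piece by its distance-to-$p$ profile and check that for $y$ in bundle $k$ and $x$ in bundle $k'\ne k$ there is always a time $m\ge 0$ with $d(T^m x,T^m y)>\delta$ — this is where the asymptotic-to-$p$ structure helps, since the orbits pass near $p$ at different ``phases.'' I would also verify that shadowing by $p$ itself contributes nothing (a non-fixed point cannot stay within small $\delta$ of the fixed point $p$ forever), so $\Phi_\delta[p]=\{p\}$. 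Once the metric geometry of the bundles is pinned down by explicit inequalities (choosing the radii $1/k$, the intra-bundle gaps, and the orbit lengths), the verification of both the upper bound $n$ and the non-triviality for every $\delta$ is a routine, if slightly tedious, estimate; the creative content is entirely in the choice of this geometry.
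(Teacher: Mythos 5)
Your approach cannot work: you propose to realize the example on a countable compact metric space (a fixed point $p$ with countably many bundles of orbits accumulating only at $p$), but the paper's own Theorem \ref{thm:CountPosNExpan} shows that a compact metric space of countable infinite cardinality carries \emph{no} positively $n$-expansive homeomorphism for any $n\in\N$. The failure is exactly at the step you flag as routine. For $T$ to be a homeomorphism of your compact space, each orbit must be forward-asymptotic to $p$ (you acknowledge this when you pass to the two-sided construction), and then your claim that ``a non-fixed point cannot stay within small $\delta$ of the fixed point $p$ forever'' is false: for any $\delta>0$, every point sufficiently far forward along any of the (infinitely many) orbits has its entire forward orbit contained in $B_{\delta/2}(p)$. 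All such points pairwise satisfy $d(T^m x, T^m y)\le\delta$ for all $m\in\Z_+$, so $\Phi_\delta[x]$ is infinite for each of them. Separation ``at different phases near $p$'' can only happen in backward time, which is irrelevant for \emph{positive} expansiveness. More generally, in any countable compact system forward orbit closures must accumulate on periodic orbits, and the transfinite argument of Theorem \ref{thm:CountPosNExpan} then always produces infinitely many mutually forward-$\delta$-shadowing points.

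The paper's construction is necessarily on an uncountable space: one takes a Denjoy homeomorphism of the circle obtained by blowing up an orbit of an irrational rotation into arcs $I_k$, replaces each arc $I_k$ by a set $A_k$ of $n$ equally spaced points, and restricts to $X=M_h\cup\bigcup_k A_k$, where $M_h$ is the Cantor minimal set. The bound $\Card\Phi_\delta[x]\le n$ comes from the circular order: $\Phi_\delta[x]$ is (contained in) a closed arc whose interior misses $M_h$ and meets at most one $A_k$, so it holds at most $2$ Cantor endpoints plus $n-2$ interior points; essentiality comes from $\diam A_k\to 0$. If you want to repair your writeup, you must abandon the countable model and introduce some uncountable structure (such as the minimal Cantor set) that prevents the forward-shadowing classes from blowing up near the accumulation set.
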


\begin{proof}
Note that when $n=1$ it is just the classical positive expansiveness. Now Let $n\ge 2$. To begin with we recall a concrete construction of Denjoy homeomorphism of the circle $S^1$. Let $\alpha$ be an irrational number and $T_{\alpha}\colon S^1\to S^1,\ x\mapsto x+\alpha \ (\mbox{mod}\ 1)$. It is well known that $(S^1, T_\alpha)$ is minimal, i.e. the orbit closure of each point is the whole circle. Now fix $x_0\in S^1$, ``blow up'' each point of the orbit $\{T^k_\alpha x_0\colon k\in\Z\}$ by  inserting arc $I_k= [a_k,b_k]$ (in the anticlockwise sense) such that for each $k\in \Z$,
\begin{itemize}
\item[(a)] $l(I_{k+1})=l(I_k)/2$ and $l(I_{-k})=l(I_k)$ for all $k\in\N\cup \{0\}$;
\item[(b)] $\sum_{k\in \Z} l(I_k)=1$, here $l(I)$ is the length of arc $I$.
\end{itemize}

We denote the expanded circle as $Y$ and the metric $d$ on it defined by $d(x,y)=\min \{l[x,y], l[y,x]\}$. Then there exists a monotone Denjoy homeomorphism $h\colon Y\to Y$ such that
$h(a_k)=a_{k+1},\ h(b_k)=b_{k+1},\ h(I_k)=I_{k+1}$ and $h(x)=T_\alpha(x)$ for all $x\in Y\setminus \cup_{k\in \Z} I_k$. A well known result says that every Denjoy map $h$ exhibits a unique minimal set $M_h$ which is isomorphic to a Cantor set, and in this case we have $M_h=Y\setminus \cup_{k\in\Z} (a_k, b_k)$.

To meet our needs, we modify the above construction by changing each $I_k$ to a set $A_k=\{a_k+l(I_k)\cdot i/(n-1)\colon i=0,1,\dots, n-1\}$ with cardinality $n$. 
Denote the  new space (which is a closed subset of $Y$) as $X$ and the homeomorphism is $T=h|_X$. Next we shall prove that $(X,T)$ is essentially positively $n$-expansive with respect to the metric $d|_X$. Let $0<\delta<l(I_0)/2$. We claim that $\Card (\Int(\Phi_\delta[x])\cap X)\le n-2$. Here $\Int(\cdot)$ denotes the interior operation.

To check this, we first show $\Int(\Phi_\delta[x])\cap M_h=\emptyset$ for all $x\in X$. Assume there is $y\in \Int(\Phi_\delta[x])\cap M_h$. Since $M_h$ is minimal there exists $k_s\to \infty$ with $h^{-k_s}(a_0)\to y$. By the construction we know $\{h^{-k}(I_0)\colon k\in\N\}$ is disjoint and $\sum_{k\in \Z} l(I_k)=1$, so $l(h^{-{k_s}}(I_0))\to 0$ as $s\to \infty$. It implies that $h^{-{k_s}}(I_0)\subset \Phi_\delta[x]$. Notice the fact that $h(\Phi_\delta[x])\subset \Phi_\delta[h(x)]$ we have $I_0\subset \Phi_\delta[h^{k_s}(x)]$, which is a contradiction with $0<\delta<l(I_0)/2$.

Now we check $\Card (\Int(\Phi_\delta[x])\cap (X\setminus M_h))\le n-2$. Otherwise there are at least $n-1$ distinct points $z_1,\dots,z_{n-1}\in \Int(\Phi_\delta[x])\cap (X\setminus M_h)$. Since $\Card (I_k\cap (X\setminus M_h))=n-2$ for each $k\in\Z$, without loss of generality we assume $z_1\in I_i, z_2\in I_j$ for some $i\neq j\in\Z$. As $\Phi_\delta[x]$ reduces to closed arc (possibly trivial) and $z_1,z_2\in \Int (\Phi_\delta[x])$, so the arc $[z_1, z_2]$ is contained in $\Phi_\delta[x]$. But there must have some point $z_0\in M_h\cap [z_1, z_2]$, which contradicts $\Int(\Phi_\delta[x])\cap M_h=\emptyset$ for all $x\in X$. Hence $\Card (\Int(\Phi_\delta[x])\cap (X\setminus M_h))\le n-2$ and the claims holds.

Since $\Phi_\delta[x]$ is a closed arc (possibly trivial), we have $\Card (\Phi_\delta[x]\cap M_h)\le 2$ and then $\Card (\Phi_\delta[x]\cap X)\le n$ for all $x\in X$. That is $(X,T)$ is positively $n$-expansive for the above $\delta$. Notice that by (a)(b) we also have $l(h^{k}(I_0))\to 0$ as $k\to \infty$, so for any $\delta>0$, there is $N\in\N$ such that $\diam (h^k(A_0))=\diam (A_k)=l(h^k(I_0))\le \delta$ for all $k\ge N$. This implies that $A_k\subset \Phi_\delta[a_k]\cap X$ and then $(X,T)$ is not positively $(n-1)$-expansive for $\delta$. Combining with $\Card (\Phi_\delta[x]\cap X)\le n$ we have $\Card (\Phi_\delta[a_k]\cap X)=n$, so $(X,T)$ is essentially positively $n$-expansive.
\end{proof}

A deep result in classical terms says that a compact metric space is finite once it carries a positively expansive homeomorphism, and several different proofs can be found in \cite{KR69, MS10} and the references therein. In Theorem \ref{thm:EssPosNExpan} we have shown that this is not true in the positive $n$-expansiveness case, that is there is an infinite t.d.s. $(X,T)$ carrying positively $n$-expansive homeomorphism. But if additionally $X$ is a countable space, we shall prove that the finiteness still holds.

Now we make some preparations. Call a point $x$ of $X$  an \textit{accumulation point} if $x\in \ol{X\setminus x}$. The collection of accumulation points of $X$ is said to be the \textit{derived set} of $X$, write as $X^d$. The derived set of $X$ of order $\alpha$ is recursively defined by the conditions: $X^{(1)}=X^d, \ X^{(\alpha+1)}=(X^{(\alpha)})^d$ and $X^{(\lambda)}=\bigcap_{\alpha<\lambda} X^{(\alpha)}$ if $\lambda$ is a limit ordinal. We denote the \textit{derived degree} of $X$ by $d(X)$ and $d(X)=\alpha$ if $X^{(\alpha)}\neq\emptyset$ and $X^{(\alpha+1)}=\emptyset$.

Note that a compact metric space $X$ is a countable set if and only if $d(X)$ exists and it is a countable ordinal number. Also, it is clear that if $d(X)=\alpha$ then $X^{(\alpha)}$ is a finite set. We leave readers to the book by Kuratowski \cite[p. 261]{Ku68} for more details.

\begin{thm}\label{thm:CountPosNExpan}
Let $(X,T)$ be a t.d.s. with countable infinite cardinality. Then $X$ carries no positively $n$-expansive homeomorphism $T$ for any $n\in\N$.
\end{thm}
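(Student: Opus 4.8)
The plan is to argue by contradiction. Suppose $T$ is positively $n$-expansive with positive $n$-expansive constant $\delta>0$. Since $\Gamma_\delta[x]\subset\Phi_\delta[x]$, $T$ is also $n$-expansive, so by Theorem~\ref{thm:fact} the sets $\mathrm{Fix}(X,T^{m})$ are finite for every $m\in\N$; in particular $T$ cannot be periodic. Because $T$ is a homeomorphism and the derived set is a topological invariant, each Cantor--Bendixson derivative $X^{(\beta)}$ is a closed $T$-invariant subset, hence $(X^{(\beta)},T)$ is again positively $n$-expansive by Remark~\ref{rem:DefExpa}\eqref{rem:DefExpa:3}. As $X$ is a countably infinite compact metric space, $d(X)$ is a countable ordinal $\ge 1$ and $F:=X^{(d(X))}$ is a nonempty finite set, which is $T$-invariant; replacing $T$ by $T^{|F|!}$ (still positively $n$-expansive by Theorem~\ref{thm:fact}\eqref{fact:1}) we may assume every point of $F$ is fixed.

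The core of the proof is the special case $d(X)=1$ with $F=\{p\}$. There $X\setminus\{p\}$ is precisely the set $I$ of isolated points of $X$, so $I$ is a countably infinite discrete space, it is $T$-invariant, and $\sigma:=T|_I$ is a permutation of $I$. Since $p$ is the unique accumulation point of $X$, the set $A:=\{y\in I:d(y,p)>\delta\}$ is finite, and because $p$ is fixed,
\[
\Phi_\delta[p]=\{x\in X:T^{k}x\in\overline{B_\delta(p)}\ \text{for all}\ k\ge0\}=\{p\}\cup\{y\in I:\sigma^{k}y\notin A\ \text{for all}\ k\ge0\}.
\]
Now I would invoke the elementary combinatorial fact that for any permutation $\sigma$ of a countably infinite set and any finite subset $A$, the set $\{y:\sigma^{k}y\notin A\ \forall k\ge0\}$ is infinite: decompose the set into $\sigma$-orbits; at most $\Card(A)$ of them meet $A$; if one such orbit is infinite then its forward tail beyond the last point it shares with $A$ is infinite and avoids $A$, while otherwise the orbits meeting $A$ are finitely many finite sets, so cofinitely many points lie in orbits disjoint from $A$. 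Hence $\Phi_\delta[p]$ is infinite, contradicting $\Card(\Phi_\delta[p])\le n$.

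It remains to reduce the general case to this one. If $d(X)=\gamma+1$ is a successor then $X^{(\gamma)}$ is an infinite positively $n$-expansive subsystem with $d(X^{(\gamma)})=1$, so it suffices to treat $d(X)=1$; and when $F$ has more than one point one passes to a well-chosen closed $T$-invariant infinite subsystem $Y$ with $Y\cap F$ a singleton. Such a $Y$ is produced by tracking the orbits that remain inside a small clopen neighbourhood of a point of $F$, using that the complement of such a neighbourhood is a compact set of strictly smaller Cantor--Bendixson rank, so that (as the periodic sets $\mathrm{Fix}(X,T^m)$ are finite and $T$ is invertible) only finitely many orbits can escape it. For $d(X)$ a limit ordinal, one argues that an infinite orbit whose closure avoids $p$ generates a subsystem of strictly smaller rank, thereby reducing matters — outside of the remaining configurations in which every orbit is periodic or clusters at $p$, which are disposed of by a separate Baire-category/orbit-counting argument along the same lines.

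The main obstacle is exactly this reduction: while the heart of the argument (the case $d(X)=1$ with $F$ a single fixed point) is short and robust, organizing the descent so that it covers all countable ordinals $d(X)$ — in particular the additively indecomposable ones, for which the Cantor--Bendixson tower itself does not lower the rank — together with the subcase of systems consisting entirely of periodic orbits, requires the more careful structural analysis sketched above.
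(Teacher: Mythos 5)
Your base case and successor reduction are correct, and they take a genuinely different route from the paper. When $d(X)=1$ and $X^{(1)}$ is a single fixed point $p$, the complement of $B_\delta(p)$ is a finite set $A$ of isolated points, $T$ restricts to a permutation of the isolated points, and your permutation lemma (only finitely many orbits meet $A$, and an infinite orbit has a forward tail avoiding $A$) correctly shows $\Phi_\delta[p]$ is infinite. Passing to $X^{(\gamma)}$ when $d(X)=\gamma+1$ is also fine, and the several-fixed-point version of the base case goes through as you indicate, since in that setting the escape set is finite and meets only finitely many orbits. This is arguably cleaner, for these cases, than the paper's argument, which instead proves by transfinite induction a property $P(\alpha)$ asserting that some ball $B_\delta(x_i)$ around a point of $X^{(\alpha)}$ contains the forward orbit closures of infinitely many points.

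The genuine gap is the limit-ordinal case, which you yourself flag as the main obstacle: the proposed dichotomy (``an infinite orbit whose closure avoids $F$ has smaller rank; otherwise a Baire-category/orbit-counting argument'') is not a proof. In the residual configuration every infinite orbit clusters at $F=X^{(\alpha)}$, the complement of $\bigcup_i B_\delta(x_i)$ is an \emph{infinite} compact set, so the finite-exceptional-set permutation argument no longer applies, and orbit counting gives nothing because infinitely many orbits may pass through that complement. Passing to derived sets cannot rescue the induction either: if $\alpha$ is additively indecomposable then $d(X^{(\gamma)})=\alpha$ for every $\gamma<\alpha$, so the tower never reaches a successor stage; and limit values of $d(X)$ do occur (e.g. $X$ homeomorphic to the ordinal $\omega^{\omega}+1$). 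The missing idea is the one the paper uses at exactly this point: for $\alpha$ a limit ordinal, $\bigcap_{\gamma<\alpha}X^{(\gamma)}=X^{(\alpha)}$ is finite and lies in the open set $\bigcup_i B_\delta(x_i)$, so by compactness of the decreasing tower some $X^{(\gamma_0)}$ with $\gamma_0<\alpha$ is already contained in $\bigcup_i B_\delta(x_i)$; this is an infinite closed $T$-invariant set, and a short continuity argument (enlarging $\gamma_0$ so that $X^{(\gamma_0)}$ misses the annuli between $B_\delta(x_i)$ and smaller concentric balls) makes each piece $X^{(\gamma_0)}\cap B_\delta(x_i)$ forward invariant, whence some $\Phi_{2\delta}[x]$ is infinite. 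Until you supply this (or an equivalent) step, your argument covers only the successor values of $d(X)$.
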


\begin{proof}
Since $X$ is countable, we can choose arbitrarily small radii such that the balls below are open and closed. Let $d(X)=\alpha$ with $\alpha$ a countable ordinal and denote $X^{(\alpha)}=\{x_1,\dots,x_n\}$. By Theorem \ref{thm:fact}\eqref{fact:1} each $x_i$ can be assumed to be fixed point. Let $\ep>0$ be such that $B_\ep(x_1),\dots, B_\ep(x_n)$ are pairwise disjoint. Choose $0<\delta<\ep/2$ such that
\begin{equation*}
  B_\delta(x_i)\subset B_\ep(x_i)\ \mbox{and} \ T B_\delta(x_i)\subset B_\ep(x_i). \eqno{(*)}
\end{equation*}
Note that each $B_\delta(x_i)$ contains countable infinite elements. Now we consider the following property
\begin{align*}
  P(\alpha) \colon & \mbox{ if } d(X)=\alpha, \ \mbox{there is} \ \ 1\le i_\alpha\le n \ \mbox{such that}  \\
   & Y_{i_\alpha} =\{x\in X\colon \ol{\textrm{orb}^+(x, T)}\subset B_\delta(x_{i_\alpha})\} \ \mbox{is countable infinite.}
\end{align*}
Here $\ol{\textrm{orb}^+(x, T)}=\ol{\{x, T x, \dots\}}$ means the positive orbit closure of $x$ under $T$. Since for any $x,y\in Y_{i_\alpha}$ we have for each $l\in\Z_+$,
$$
d(T^l x,T^l y)<d(T^l x,x_{i_\alpha})+d(x_{i_\alpha},T^l y)<\ep,
$$
so if $P(\alpha)$ holds then $(X,T)$ is not positively $n$-expansive for any $n\in\N$. Next we would follow this idea to check the validity of $P(\alpha)$ by transfinite discussion.

\smallskip
Assume that $\alpha$ is not a limit ordinal, that is $\alpha=\beta+1$ for some ordinal number $\beta$. Then two cases are involved: (i) $X^{(\beta)}$ is pointwise periodic and (ii) there is a non-periodic point $y\in X^{(\beta)}$.

For the case (i), we claim that
$$
Z_\beta=\{x\in X\colon x\in X^{(\beta)} \ \mbox{and}\ \ol{\textrm{orb}^+(x, T)}\not\subset \cup_{i=1}^n B_\delta(x_i)\}
$$
 is finite. In fact if the contrary  then by $T(X^{(\beta)})\subset X^{(\beta)}$ we have a limit point $x_0\in X\setminus (\cup_{i=1}^n B_\delta(x_i))\cap X^{(\alpha)}=\emptyset$, a contradiction. Hence $B_\delta(x_i)\cap Z_\beta$ is finite for each $1\le i\le n$. On the other hand, by condition $(*)$ we get  $Y_i\supset X^{(\beta)}\cap B_\delta(x_i)\setminus Z_\beta$, and note that $X^{(\beta)}\cap B_\delta(x_i)$ is countable infinite  then so is $Y_i$ for every $1\le i\le n$. This case ends.

Now consider the case (ii). First we note that $X\setminus (\cup_{i=1}^n B_\delta(x_i))\cap X^{(\beta)}$ is finite. Then combine with the fact $T(X^{(\beta)})\subset X^{(\beta)}$ and the condition that $y\in X^{(\beta)}$ is not periodic, we can declare that there exist $m\in\N$ and $1\le i_\alpha\le n$ such that $T^m y\in Y_{i_\alpha}$, and then $\ol{\textrm{orb}^+(T^m y, T)}\subset Y_{i_\alpha}$. As the cardinality of $\ol{\textrm{orb}^+(T^m y, T)}$ is countable infinite, we have $P(\alpha)$ is true.

\smallskip
Assume that $\alpha$ is a limit ordinal number. Since $\bigcap_{\gamma<\alpha} X^{(\gamma)}=X^{(\alpha)}$, then there exists an ordinal $\gamma_{_0}<\alpha$ such that $X^{(\gamma)}\subset \cup_{i=1}^n B_\delta(x_i)$ for all $\gamma>\gamma_{_0}$. Note that $T(X^{(\gamma)})\subset X^{(\gamma)}$ for any $\gamma<\alpha$.
By the continuity of $T$, there is $\sigma>0$ such that if $z\in X^{(\gamma)}\cap B_\sigma(x_i)$ then $T z\in X^{(\gamma)}\cap B_{\delta}(x_i)$ for all $\gamma>\gamma_{_0}$ and all $i=1,\dots, n$. For this $\sigma>0$, we choose suitable $\gamma_{_0}<\gamma_{_1}<\alpha$ such that $X^{(\gamma_{_1})}\subset \cup_{i=1}^n B_\delta(x_i)$. Hence for each $1\le i\le n$ we have
$$T(X^{(\gamma_{_1})}\cap B_{\delta}(x_i))=T(X^{(\gamma_{_1})}\cap B_{\sigma}(x_i))\subset X^{(\gamma_{_1})}\cap B_{\delta}(x_i).$$
And inductively we see
$$T^k(X^{(\gamma_{_1})}\cap B_{\delta}(x_i))\subset X^{(\gamma_{_1})}\cap B_{\delta}(x_i) \ \ \mbox{for any} \ \ k\in\Z.$$
That is $X^{(\gamma_{_1})}\cap B_{\delta}(x_i)\subset Y_i$ and then $P(\alpha)$ is true. We are done.
\end{proof}

\begin{rem}
We point out that Theorem \ref{thm:CountPosNExpan} actually presents a speical class of essentially positively $\aleph_0$-expansive systems, and in this countable case, essentially positive $\aleph_0$-expansiveness is equivalent to positive non-$n$-expansiveness for any $n\in\N$. But in general they are not the same, for example see Corollary \ref{cor:nonexist-cicle}.
\end{rem}

\section{Levels of expansiveness}\label{sect:LevExpan}
Parallel to the previous section, we naturally ask if there exist examples to distinguish all the levels of expansiveness. It is not hard to check that the example of Theorem \ref{thm:EssPosNExpan} is essentially positively $n$-expansive but fails to be essentially $n$-expansive for any $n\ge 2$. In fact it is expansive (to check $\Card (\Gamma_\delta [x]\cap X)=1$). In \cite{Ar14} Artigue studied $(m,l)$-expansiveness ($m>l\ge 1$) and showed that there are $(4,2)$-expansive homeomorphisms that are not $(3,2)$-expansive (\cite[Proposition 4.3]{Ar14}), but we know that $(4,2)$-expansiveness implies $(2,1)$-expansiveness which is equivalent to expansiveness (\cite[Proposition 1.4]{Ar14}). Besides, in \cite{APV13} the authors worked out an example of a $2$-expansive but not expansive homeomorphism on surface, but the general case is still open. In this section we shall present a complete answer.

\subsection{Essential $n$-expansiveness}
Note that the next characterization is mainly inspired by Kato and Park \cite{KP99}, and for convenience we follow their notations and terms. Let $S=\{s_i\colon i\in\Z\}\cup \{s_\infty\}$ with $\lim_{i\to +\infty}s_i=\lim_{i\to -\infty}s_i=s_\infty$ and $s_i, s_\infty$ be points in the plane with $s_i\neq s_j\ (i\neq j),\ s_i\neq s_\infty$. We define a homeomorphism $g\colon S\to S$ by $g(s_i)=s_{i+1}$ and $g(s_\infty)=s_\infty$.

Let $r\in\N$ and $U(s_{-r}),\dots,U(s_r),U(s_\infty)$ be closed neighborhoods of  $s_{-r},\dots,s_r,s_\infty$ in the plane respectively. Require $U(s_i)\cap U(s_i)=\emptyset,\ i\neq j$ and $S\subset \cup_{i=-r}^r U(s_i)\cup U(s_\infty)$. Set $V(=V_r)=\cup_{i=-r}^r U(s_i)\cup U(s_\infty)$ and call $V$ as a \textit{neighborhood system} of $S$. Assume a sequence $\{t_i\colon i\in\Z\}\subset V$ with $\lim_{i\to +\infty}t_i=\lim_{i\to -\infty}t_i=s_\infty$. Fix $d\in\Z_+$. We say $\{t_i\colon i\in\Z\}$ \textit{winds d-times around $S$} (with respect to $V$), if there is $k\in\Z_+$ satisfying that
\begin{enumerate}
  \item[(a)] $k>2r$,
  \item[(b)] $t_{jk+i}\in U(s_{-r+i})$ for all $0\le i\le 2r$, $1\le j\le d$, and
  \item[(c)] $t_m\in U(s_\infty)$ for other $t_m$.
\end{enumerate}
For brevity we denote $w_S(\{t_i\colon i\in\Z\}; V)=d$. Here we remark that the conditions (a)-(c) have a small difference with the original ones in \cite{KP99}, but they are still available for some well chosen sequence $\{t_i\colon i\in\Z\}$. The intention of this change will be revealed when consider the van der Waerden depth (see \cite{KLOX15}) later.

Consider a countable t.d.s. $(X, T)$ with $S\subset X\subset V$ and $T\colon X\to X$ an extension of $g$. Let $d\in\Z_+$. We say a point $x\in X$ has \textit{winding number of d} if $\textrm{orb}(x,T)$ winds $d$-times around $S$ (with respect to $V$), and denote $w_S(x; T, V)=d$. It is clear that $w_S(s_i; T, V)=1$ for $s_i \in S\ (i\neq \infty)$ and $w_S(s_\infty; T, V)=0$. Let $Y$ be a subset of $X$, we write $w_S(Y;T,V)=\{w_S(x; T, V)\colon x\in Y\}$ and call $w_S(Y;T,V)$ is the \textit{winding set} of $Y$ around $S$.

\begin{thm}\label{thm:EssNExpan}
There exists an essentially $n$-expansive t.d.s. $(X,T)$ for every $n\in\N$.
\end{thm}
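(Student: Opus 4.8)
The plan is to construct, for each $n\in\N$, a countable t.d.s. $(X,T)$ that extends the model $(S,g)$ above and whose winding sets, measured against a suitable neighborhood system $V$, encode exactly the failure of $(n-1)$-expansiveness while still forcing $n$-expansiveness. For $n=1$ one takes any expansive example, so assume $n\ge 2$. The idea is to attach to $S$ exactly $n-1$ extra ``satellite'' orbits $\{s^{(p)}_i : i\in\Z\}$ for $p=1,\dots,n-1$, each converging to $s_\infty$ as $i\to\pm\infty$, arranged so that the orbit of $s^{(p)}_i$ spends more and more time near $s_\infty$ before making a single loop that shadows $s_{-r},\dots,s_r$ in turn; concretely one wants $w_S(s^{(p)}_i;T,V)=1$ for each such point but the shadowing to be arbitrarily tight as $i\to\infty$, so that in every $\delta$-neighborhood the loop portions of the satellite orbits, together with the corresponding piece of $S$, become $\delta$-indistinguishable. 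Arranging $n-1$ such satellites sitting over the single orbit $S$ then yields a point (namely some $s_i$, or $s_\infty$) whose $\Gamma_\delta$-set has exactly $n$ elements for every $\delta>0$.

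First I would fix $r$ and the closed neighborhoods $U(s_{-r}),\dots,U(s_r),U(s_\infty)$ with the disjointness and covering properties stated before the theorem, making $V=V_r$. Second I would build the point set $X=S\cup\bigcup_{p=1}^{n-1}\{s^{(p)}_i:i\in\Z\}$ as a subset of the plane lying inside $V$, choosing the positions of the $s^{(p)}_i$ so that: (i) each satellite sequence converges to $s_\infty$ at both ends, making $X$ a compact countable set with a single accumulation point, hence homeomorphic to a convergent-sequence space; (ii) for large $|i|$ the point $s^{(p)}_i$ lies in $U(s_\infty)$, and only a block of $2r+1$ consecutive indices of each satellite orbit visits $U(s_{-r}),\dots,U(s_r)$ respectively, so each satellite point has winding number $1$; (iii) the $p$-th satellite loop, for the $i$-th point, is placed within distance $1/i$ of the corresponding points of $S$ along the whole loop. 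Define $T$ on $X$ by $T(s_i)=s_{i+1}$, $T(s_\infty)=s_\infty$, $T(s^{(p)}_i)=s^{(p)}_{i+1}$; continuity and homeomorphism properties follow from the convergence of the satellite sequences to the fixed point $s_\infty$. Third I would verify $n$-expansiveness: pick $\delta$ small enough that the $U(s_j)$ are $\delta$-separated; if $y\in\Gamma_\delta[x]$ then the orbits of $x$ and $y$ must stay $\delta$-close for all time, which (because the only way two distinct orbits can remain close is to be the base $S$-orbit together with satellites shadowing it, and there are only $n-1$ satellites) forces $\Card(\Gamma_\delta[x])\le n$; here one argues that a satellite orbit can shadow at most the $S$-orbit it sits over, and no two satellites with different $p$ can shadow each other past their loop blocks since those blocks occur at index-shifts that eventually separate. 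Finally, non-$(n-1)$-expansiveness: given any $\delta>0$, pick $i$ with $1/i<\delta$ and all $n-1$ loop blocks of index $i$ already tight; then $s_{j}$ (the relevant base point) together with the $n-1$ points $s^{(p)}_{i}$ form an $n$-element subset of $\Gamma_\delta[\,\cdot\,]$, so no $\delta$ is an $(n-1)$-expansive constant, and combined with the bound $\le n$ this gives essential $n$-expansiveness.

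The main obstacle I anticipate is step three's upper bound: one must rule out ``unexpected'' coincidences where, at some scale $\delta$, a satellite orbit manages to $\delta$-shadow another satellite orbit (or a shifted copy of $S$) for all of $\Z$, which would inflate some $\Gamma_\delta$-set beyond $n$. Controlling this requires a careful choice of the longitudinal placement of the loop blocks — e.g. making the ``time spent near $s_\infty$'' before the loop grow strictly with $i$ and differ between satellites — so that for any fixed $\delta$ there is a uniform time window on which any two of the $n$ candidate orbits that are genuinely distinct must separate by more than $\delta$. A secondary technical point is checking that the whole space $X$, as a subset of the plane with the described metric, really is compact and that $T$ extends $g$ continuously; this is routine once the satellite sequences are confirmed to converge to $s_\infty$. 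I would also keep the modified winding conditions (a)–(c) in mind throughout, since the subsequent van der Waerden depth corollary depends on expressing these examples through winding numbers $w_S(x;T,V)$, so the construction should be phrased so that $w_S(X;T,V)=\{0,1\}$ with the essential-$n$ phenomenon coming from multiplicity rather than from large winding numbers.
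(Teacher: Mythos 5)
There is a genuine gap, and it is structural. Your space contains only $n$ distinct orbits in total (the base orbit $S$ plus $n-1$ satellites), and for any two distinct points $x\neq y$ of a metric space the quantity $\rho(x,y)=\sup_{l\in\Z}d(T^lx,T^ly)$ is strictly positive; the pair $\{x,y\}$ can lie in a common $\Gamma_\delta$ only when $\delta\ge\rho(x,y)$. In your construction each satellite has a single loop block at a fixed location in its orbit, so only finitely many relative time-shifts keep two of your $n$ orbits aligned (any other shift already separates them by the fixed gap between the sets $U(s_k)$), and for each aligned shift the supremum $\rho$ is a fixed positive number. Hence $\inf\rho(x,y)>0$ over all pairs of distinct points of $X$, and for $\delta$ below this infimum every $\Gamma_\delta[x]$ is a singleton: your system is plainly expansive, not essentially $n$-expansive, since the definition demands a point with $\Card\Gamma_\delta[x]=n$ for \emph{every} $\delta>0$. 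The step ``pick $i$ with $1/i<\delta$'' conflates choosing a different point $s^{(p)}_i$ on the \emph{same} satellite orbit (which merely re-synchronizes time and cannot make the shadowing tighter) with producing a genuinely new orbit that shadows $S$ within $\delta$. The obvious repair --- infinitely many satellites over $S$, the $i$-th shadowing $S$ within $1/i$ --- overshoots in the other direction: for any fixed $\delta$ all satellites with $1/i<\delta$ would then lie in $\Gamma_\delta[s_j]$ simultaneously, making $\Gamma_\delta[s_j]$ infinite and destroying $n$-expansiveness altogether.

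The paper escapes both traps by never letting anything shadow $S$ itself. It takes a decreasing family of neighborhood systems $V_1\supset V_2\supset\cdots$ of $S$ with $\bigcap_iV_i=S$ and, for each $i$, inserts a self-contained cluster of exactly $n$ new orbits $\{x^i_{m,j}\colon j\in\Z\}$, $m=1,\dots,n$, all lying in $V_i$, all winding $p_i$ times around $S$ for a prime $p_i$, with the $n$ points $x^i_{1,j},\dots,x^i_{n,j}$ placed in the same cell $U_i(s_{i_j})$ for every $j$. The common winding number and common cells force the $n$ orbits of cluster $i$ to shadow one another at scale comparable to $\diam U_i(\cdot)\to0$, which yields $\Card\Gamma_\delta=n$ at every scale; the pairwise distinct primes (all different from the winding number $1$ of $S$) forbid shadowing across clusters or between a cluster and $S$, which yields the upper bound $\Card\Gamma_\delta[x]\le n$. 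Note that this mechanism forces the points of $S$ to be accumulated by the clusters, so $d(X)=2$, whereas your $X$ is a finite union of convergent sequences with $d(X)=1$; the winding-number bookkeeping (or some substitute invariant) is exactly what your sketch is missing to make the upper and lower bounds coexist.
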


\begin{proof}
Note that the above t.d.s. $(S,g)$ is expansive. Now let $n\ge 2$. Assume $M$ is any infinite subset of prime numbers and $V$ is any neighborhood system of $S$. We shall construct a countable t.d.s. $(X,T)$ satisfying the following conditions:
\begin{enumerate}
    \item[(a)] $S\subset X\subset V$, $d(X)=2$ and $X^{(2)}=\{s_\infty\}$;
    \item[(b)] $T\colon X\to X$ is a homeomorphism and $T|_S=g\colon S\to S$;
    \item[(c)] for any $x\in X\setminus \{s_\infty\}$, $w_S(x; T,V)\neq 0$ and $w_S(X\setminus S; T,V)\subset M$;
    \item[(d)] for each pair $x_1,x_2\in X\setminus \{s_\infty\}$, we have either $\orb (x_1,T)=\orb (x_2,T)$ or $\orb (x_1,T)\cap \orb (x_2,T)=\emptyset$. Moreover,
    \item[ ] \begin{itemize}
         \item[(d1)] if $\orb (x_1,T)=\orb (x_2,T)$ then  $w_S(x_1; T, V)= w_S(x_2;T, V)$, and
         \item[(d2)] if $\orb (x_1,T)\cap \orb (x_2,T)=\emptyset$, then either $w_S(x_1; T,  V)\neq w_S(x_2; T,V)$, or there are $n$, and only $n$, pairwise disjoint orbits, say $\orb (x_i,T)$, $i=1,\dots,n$, such that $w_S(x_i; T,  V)= w_S(x_j; T,V)$ for every $1\le i,\ j\le n$;
      \end{itemize}
    \item[(e)] for any $\delta>0$, there exist $y_1,\dots, y_n\in X$ such that $\Gamma_\delta[y_i]=\{y_1,\dots,y_n\}$ for any $1\le j\le n$.
\end{enumerate}
It is easy to see that provided with the conditions above, $(X,T)$ is essentially $n$-expansive for any $n\ge 2$.

Now we give the construction. Denote $M=\{p_1,p_2,\dots\}$ with $p_i\neq p_j$ for any $i\neq j$. Choose a descending neighborhood systems family $V=V_1,\ V_2\dots$ of $S$ with $\bigcap_{i=1}^{+\infty} V_i=S$. For every $i\in \N$, add $n-1$ suitable  decreasing neighborhood systems $W_1^i,\dots,W_{n-1}^i$ such that
$V_{i}\supset W_1^i\supset \dots\supset W_{n-1}^i\supset V_{i+1}$. Put $V_i=W_0^i,\ V_{i+1}=W_n^i$, then for each $i\in\N$ and $1\le m\le n$ we pick carefully a sequence $\{x_{m,j}^i\colon j\in\Z\}$ consisting of different points in $W_{m-1}^i\setminus W_m^i$, such that $\lim_{j\to +\infty}x_{m,j}^i=\lim_{j\to -\infty}x_{m,j}^i=s_\infty$ and  $w_S(\{x_{m,j}^i\colon j\in\Z\}; V_i)=p_i$. Write $V_i=\bigcup_{j=-r_i}^{r_i} U_i(s_j)\bigcup U_i(s_\infty) $ and further we assume $\{x_{m,j}^i\colon m=1,\dots,n\}\subset U_i(s_{i_j})$ for each $i\in\N$ and $j\in\Z$.
Renumber all the sequences $\{x_{m,j}^i\colon j\in\Z\}$ well-ordered as $Y_1,Y_2,\dots$. We can see that each $Y_i$ is homeomorphic to $S$, $Y_i\cap S=Y_i\cap Y_j=\{s_\infty\}$ for $i\neq j$ and
$\lim_{i\to +\infty}H_d(Y_i,S)=0$. Take $X=\bigcup_{i\in\N} Y_i\cup S$ and define $T\colon X\to X$ by $T|_S=g\colon S\to S$, $T(x_{m,j}^i)=x_{m,j+1}^i$. Then $T$ is a homeomorphism and $d(X)=2$, $X^{(2)}=\{s_\infty\}$. It is not hard to see that $(X,T)$ satisfies all the conditions and so it is the desired system.
\end{proof}

In \cite{KP99} Kato and Park showed that $X$ admits an expansive homeomorphism if and only if its derived degree is not a limit ordinal number. We can improve this deep result to  $n$-expansiveness case.

The following Lemma can be found in \cite[Proposition 2.1]{KP99}.

\begin{lem}\label{lem:Count}
Let $X,\ Y$ be two countable metric spaces with $d(X)=d(Y)=\alpha$. If  $X^{(\alpha)}$ and $Y^{(\alpha)}$ are homeomorphic, then so are $X$ and $Y$.
\end{lem}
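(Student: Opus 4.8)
I would deduce the lemma from the classical Mazurkiewicz--Sierpi\'nski classification of countable compact metric spaces: such a space $Z$ is determined up to homeomorphism by the pair $\bigl(d(Z),\Card(Z^{(d(Z))})\bigr)$ --- recall $Z^{(d(Z))}$ is finite --- a representative of the class $(\beta,m)$ being the ordinal space $\omega^{\beta}\cdot m+1$ with the order topology. Granting this, the lemma is immediate: $X^{(\alpha)}$ and $Y^{(\alpha)}$ are finite discrete and homeomorphic, hence equipotent, say of cardinality $m$, so $X\cong\omega^{\alpha}\cdot m+1\cong Y$. Thus the real task is to prove the classification for spaces of rank $\le\alpha$, which I would do by transfinite induction on $\alpha$. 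First I would reduce to the case $\Card(X^{(\alpha)})=\Card(Y^{(\alpha)})=1$: a countable metric space has a base of clopen sets, so writing $X^{(\alpha)}=\{a_1,\dots,a_m\}$ one splits $X$ into a clopen partition $X=U_1\sqcup\cdots\sqcup U_m$ with $a_i\in U_i$; since $U_i$ is clopen, $U_i^{(\beta)}=U_i\cap X^{(\beta)}$ for every $\beta$, so $d(U_i)=\alpha$ and $U_i^{(\alpha)}=\{a_i\}$, and likewise for $Y$, so it suffices to treat the one-point-top case.

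For the inductive step, let $d(X)=\alpha$ and $X^{(\alpha)}=\{a\}$, and fix a clopen neighbourhood basis $X=P_0\supseteq P_1\supseteq\cdots$ at $a$ with $\diam(P_k)\to 0$ and $\bigcap_k P_k=\{a\}$. The differences $A_k=P_k\setminus P_{k+1}$ are clopen, $X\setminus\{a\}=\bigsqcup_k A_k$, and $A_k^{(\alpha)}=A_k\cap\{a\}=\emptyset$, so $d(A_k)<\alpha$; moreover, since $a\in X^{(\beta)}$ for every $\beta<\alpha$, for each such $\beta$ infinitely many $A_k$ have rank $\ge\beta$ (when $\alpha=\gamma+1$ this forces infinitely many $A_k$ of rank exactly $\gamma$ with none of larger rank; when $\alpha$ is a limit, the ranks of the $A_k$ are cofinal in $\alpha$). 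By the induction hypothesis each $A_k$ is determined up to homeomorphism by $\bigl(d(A_k),\Card(A_k^{(d(A_k))})\bigr)$, hence is a finite disjoint union of copies of the standard spaces $\omega^{\delta}+1$ with $\delta<\alpha$; and $X$ is the one-point compactification of $\bigsqcup_k A_k$. It remains to recognise this one-point compactification, which I would do by elementary rearrangements of the clopen pieces, each legitimate by the induction hypothesis at ranks $<\alpha$: (i) merging two pieces, one of strictly smaller rank, yields a space homeomorphic to the larger (the smaller is absorbed); (ii) the one-point compactification of countably many copies of $\omega^{\gamma}+1$ is $\omega^{\gamma+1}+1$; (iii) the one-point compactification of $\bigsqcup_j(\omega^{\gamma_j}+1)$ with $\gamma_j$ strictly increasing to $\alpha$ is $\omega^{\alpha}+1$, this being nothing but the canonical clopen decomposition of $\omega^{\alpha}+1$ cut at the points $\omega^{\gamma_j}$. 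Regrouping the $A_k$ into finite blocks of strictly increasing ranks cofinal in $\alpha$, absorbing all lower-rank debris, and applying (ii)--(iii) brings $X$ to the normal form, completing the induction.

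The conceptual content is light; the main obstacle I expect is the bookkeeping in this last step. The clopen decomposition of $X\setminus\{a\}$ is highly non-canonical --- pieces may always be merged --- so one cannot match the decompositions of $X$ and $Y$ piece by piece, and must instead route everything through the normal form, all the while keeping the diameters of the clopen pieces going to $0$ (so that every clopen neighbourhood of $a$ omits only finitely many pieces, i.e.\ $a$ genuinely sees the one-point-compactification topology), tracking how lower-rank pieces get absorbed, and, in the limit case, reconciling two a priori different cofinal sequences of ranks by repeated regrouping. Since all of this is precisely the proof of the Mazurkiewicz--Sierpi\'nski theorem, in a write-up one would simply invoke that classification (or \cite{KP99}).
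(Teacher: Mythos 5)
The paper offers no proof of this lemma at all --- it simply cites \cite[Proposition 2.1]{KP99} --- so there is nothing internal to compare against; your argument, reducing the statement to the Mazurkiewicz--Sierpi\'nski classification of countable compact spaces by the invariant $\bigl(d(Z),\Card(Z^{(d(Z))})\bigr)$ with normal form $\omega^{\alpha}\cdot m+1$, is correct and is exactly the standard proof underlying that citation (the clopen splitting at $X^{(\alpha)}$, the decomposition of $X\setminus\{a\}$ into clopen pieces of smaller rank with ranks cofinal in $\alpha$, and the absorption/regrouping steps (i)--(iii) are all sound). The only caveat worth recording: compactness of $X$ and $Y$, though omitted from the statement, is essential (without it $X^{(\alpha)}$ need not be finite and $X$ need not be the one-point compactification of the pieces $A_k$, and the lemma is false, e.g.\ for a convergent sequence versus a convergent sequence plus an infinite discrete set); it is implicit in the paper's setting and your proof uses it correctly.
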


\begin{thm}\label{thm:NExpanChar}
Let $(X,T)$ be a t.d.s. with $d(X)=\alpha\ge 2$ and $n\in\N$. Then $X$ admits an essentially $n$-expansive homeomorphism if and only if $\alpha$ is not a limit ordinal number.
\end{thm}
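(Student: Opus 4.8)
The plan is to treat the two implications separately. For the ``only if'' direction I will run the limit-ordinal argument from the proof of Theorem~\ref{thm:CountPosNExpan} in the homeomorphism setting; for the ``if'' direction I will graft a Kato--Park expansive model of derived degree $\alpha$ onto the degree-$2$ essentially $n$-expansive example of Theorem~\ref{thm:EssNExpan}.

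\emph{The ``only if'' direction.} I argue by contraposition: if $\alpha$ is a limit ordinal then $X$ admits no $\aleph_0$-expansive homeomorphism, hence, by Remark~\ref{rem:DefExpa}\eqref{rem:DefExpa:2}, no essentially $n$-expansive one. Suppose $T$ were $\aleph_0$-expansive with constant $\delta>0$. As $X$ is countable we may shrink $\delta$ so that all balls below are clopen and the balls $B_\delta(x_i)$ about the points of the finite set $X^{(\alpha)}=\{x_1,\dots,x_k\}$ are pairwise disjoint; by Theorem~\ref{thm:fact}\eqref{fact:1} we may assume each $x_i$ is fixed, and pick $0<\delta'\le\delta/2$ with $T(B_{\delta'}(x_i))\subset B_{\delta/2}(x_i)$. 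Since $\bigcap_{\gamma<\alpha}X^{(\gamma)}=X^{(\alpha)}\subset\bigcup_i B_{\delta'}(x_i)$, compactness yields $\gamma_1<\alpha$ with $X^{(\gamma_1)}\subset\bigcup_i B_{\delta'}(x_i)$. The sets $P_i=X^{(\gamma_1)}\cap B_{\delta'}(x_i)$ are disjoint and cover $X^{(\gamma_1)}$, and $T(X^{(\gamma_1)})=X^{(\gamma_1)}$ because $T$ is a self-homeomorphism; the choice of $\delta'$ together with disjointness gives $T(P_i)\subset P_i$ for each $i$, whence $\bigsqcup_i T(P_i)=X^{(\gamma_1)}=\bigsqcup_i P_i$ forces $T(P_i)=P_i$. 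As $\gamma_1<\alpha=d(X)$, some $P_{i_0}$ is infinite, and for all $x,y\in P_{i_0}$ and $l\in\Z$ we get $T^l x,T^l y\in B_{\delta'}(x_{i_0})$, so $d(T^l x,T^l y)<2\delta'\le\delta$; thus $P_{i_0}\subset\Gamma_\delta[x]$ is infinite, contradicting the choice of $\delta$.

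\emph{The ``if'' direction.} Let $\alpha\ge 2$ be a successor ordinal. Since $X^{(\alpha)}$ is a nonempty finite set and, by Lemma~\ref{lem:Count}, countable compact metric spaces with equal derived degree and homeomorphic top derived sets are homeomorphic, it suffices to build, for each such $\alpha$, one essentially $n$-expansive system $(X_0,T_0)$ with $d(X_0)=\alpha$ and $|X_0^{(\alpha)}|=1$: the case $|X^{(\alpha)}|=k$ then follows by taking the disjoint union of $k$ copies, which is still $n$-expansive (take the minimum of the constants) and still not $(n-1)$-expansive (one copy witnesses this). To build $(X_0,T_0)$, fix a countable compact metric space $Z$ with $d(Z)=\alpha$ and $Z^{(\alpha)}=\{z_\infty\}$ a single point; as $\alpha$ is not a limit ordinal, the theorem of Kato and Park \cite{KP99} supplies an expansive homeomorphism $S_Z$ of $Z$, and $S_Z$ fixes $z_\infty$ (it preserves $Z^{(\alpha)}$). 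Let $(E,g_E)$ be the degree-$2$ essentially $n$-expansive system of Theorem~\ref{thm:EssNExpan}, so $E^{(2)}=\{s_\infty\}$ and $g_E(s_\infty)=s_\infty$. Form the wedge $X_0:=Z\vee E$ at the identified point $z_\infty=s_\infty$, with metric $d_{X_0}(a,b)=d_Z(a,b)$ for $a,b\in Z$, $d_{X_0}(a,b)=d_E(a,b)$ for $a,b\in E$, and $d_{X_0}(a,b)=d_Z(a,z_\infty)+d_E(z_\infty,b)$ for $a\in Z,\ b\in E$; let $T_0$ act as $S_Z$ on $Z$ and as $g_E$ on $E$, a homeomorphism fixing $z_\infty$. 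Inspecting neighbourhoods gives $X_0^{(\gamma)}=Z^{(\gamma)}\cup E^{(\gamma)}$ for every $\gamma$, so $d(X_0)=\max(\alpha,2)=\alpha$ and $X_0^{(\alpha)}=\{z_\infty\}$.

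\emph{Verification and the main obstacle.} It remains to see that $(X_0,T_0)$ is $n$- but not $(n-1)$-expansive, i.e.\ essentially $n$-expansive by the remark after Definition~\ref{de:essential}. Fix $\delta_0>0$ below an expansive constant $\delta_Z$ of $S_Z$, below an $n$-expansive constant of $(E,g_E)$, and small enough that $\Gamma_{\delta_0}[s_\infty,g_E]=\{s_\infty\}$ (valid for small constants by the construction of Theorem~\ref{thm:EssNExpan}, where every orbit other than the fixed point is carried far from $s_\infty$ by its winding). The crux is the separation claim: for $0<\delta\le\delta_0$ and $x\in Z$ (resp.\ $x\in E$) one has $\Gamma_\delta[x,T_0]\subset Z$ (resp.\ $\subset E$). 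Indeed, a point $y$ shadowing $x$ from the other summand would satisfy, for every $l$, that $T_0^lx$ and $T_0^ly$ are $\delta$-close and in different summands, which by the form of the wedge metric forces both $d(T_0^lx,z_\infty)$ and $d(T_0^ly,z_\infty)$ to be $<\delta\le\delta_Z$ for all $l$; expansiveness of $S_Z$ (with $z_\infty$ fixed) then collapses the one of $x,y$ lying in $Z$ to $z_\infty$, and $\Gamma_{\delta_0}[s_\infty,g_E]=\{s_\infty\}$ collapses the other, a contradiction. Granting the claim, $\Gamma_\delta[x,T_0]=\{x\}$ for $x\in Z$, while for $x\in E$ it equals $\Gamma_\delta[x,g_E]$ and so has at most $n$ elements; hence $(X_0,T_0)$ is $n$-expansive with constant $\delta_0$. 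It is not $(n-1)$-expansive: for any candidate constant $\delta\le\delta_0$, the non-$(n-1)$-expansiveness of $(E,g_E)$ furnishes $x\in E\setminus\{s_\infty\}$ with $|\Gamma_\delta[x,g_E]|\ge n$, and then $|\Gamma_\delta[x,T_0]|=|\Gamma_\delta[x,g_E]|\ge n$. The only nontrivial external ingredient is the Kato--Park realization of a non-limit $\alpha$ by an expansive homeomorphism; the rest is the bookkeeping of keeping the two summands dynamically apart, parallel to the $\ep$-versus-$\delta$ estimates of Theorem~\ref{thm:CountPosNExpan}. I expect the only delicate point to be the uniformity (over all $\delta\le\delta_0$) of the separation claim near $z_\infty$, which is precisely why $\delta_0$ must be chosen in terms of both an expansive constant of $Z$ and the behaviour of $E$ at $s_\infty$.
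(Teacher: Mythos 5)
Your proof is essentially correct, but the sufficiency direction takes a genuinely different route from the paper's. The paper proves sufficiency by a self-contained transfinite induction: it formulates a property $P(\alpha)$ recording winding-number data (conditions (a)--(e)) at every level, takes $P(2)$ to be Theorem \ref{thm:EssNExpan}, and at each inductive stage glues countably many copies of lower-level models into shrinking neighbourhood systems of $S$, using products of distinct primes to keep the winding sets of distinct orbits apart. You instead black-box the Kato--Park realization theorem to obtain an expansive model $Z$ of the prescribed derived degree $\alpha$ and wedge it at the common fixed point with the degree-$2$ essentially $n$-expansive example $E$ of Theorem \ref{thm:EssNExpan}; the wedge metric makes the separation claim $\Gamma_\delta[x,T_0]\subset Z$ (resp.\ $\subset E$) a one-line computation, since $\delta$-closeness across the wedge forces both orbits into the $\delta$-ball of the wedge point, where expansiveness of $S_Z$ and the (verifiable) fact that $\Gamma_{\delta_0}[s_\infty,g_E]=\{s_\infty\}$ collapse everything. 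This is shorter and cleaner for Theorem \ref{thm:NExpanChar} itself. What it does not buy is the extra structure the paper needs later: the winding-number bookkeeping carried through every level of the induction is exactly what makes the constructed $(X_\alpha,T_\alpha)$ have van der Waerden depth $\alpha$ in Theorem \ref{thm:Van} and Corollary \ref{cor:Van}; your $Z$-summand, imported wholesale from Kato--Park, carries no such control. Your reduction to a single top-level point via Lemma \ref{lem:Count} and the disjoint-union argument matches the paper's.

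One genuine (though easily repaired) error: in the necessity direction you claim that a limit derived degree excludes $\aleph_0$-expansive homeomorphisms and derive a ``contradiction'' from $\Gamma_\delta[x]\supset P_{i_0}$ being infinite. Since $X$ is countable, every homeomorphism of $X$ is trivially $\aleph_0$-expansive, and an infinite countable $\Gamma_\delta[x]$ contradicts nothing if $\delta$ is only an $\aleph_0$-expansive constant. The argument you actually run (an infinite set $P_{i_0}$ with $T(P_{i_0})=P_{i_0}$ contained in a single small ball) does contradict $n$-expansiveness for every finite $n$, which is all the theorem requires; state the contrapositive in terms of $n$-expansiveness, as the paper does at the end of the proof of Theorem \ref{thm:CountPosNExpan}, and this direction is fine.
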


\begin{proof}
Note that at the end part of the proof in Theorem \ref{thm:CountPosNExpan}, we in fact showed that if $\alpha$ is a limit ordinal number then $X$ admits no $n$-expansive homeomorphism for any $n\in\N$. So it remains to show the sufficiency.

Assume $\alpha\ge 2$ is not a limit ordinal. As $d(X)=\alpha$ we have $X^{(\alpha)}=\{a_1,\dots,a_n\}$. Choose suitable closed subsets $Z_1,\dots,Z_n$ of $X$ such that $X=\bigcup_{i=1}^n Z_i$, $Z_i\cap Z_j=\emptyset$ and $d(Z_i)=\{a_i\}$. Next we shall inductively construct an essentially $n$-expansive t.d.s. $(X_\alpha,T_\alpha)$ in the plane with $d(X_\alpha)=\alpha$ and $X^{(\alpha)}=\{s_\infty\}$. If this were done, then by Lemma \ref{lem:Count} and Remark \ref{rem:DefExpa}\eqref{rem:DefExpa:1} there exists an essentially $n$-expansive homeomorphism on every $Z_i$. It implies that we can find an essentially $n$-expansive homeomorphism on $X$ and the proof ends.

Now we construct the desired t.d.s. $(X_\alpha,T_\alpha)$, and similar as Theorem \ref{thm:EssNExpan} it suffices to fulfil the below property $P(\alpha)$:
\begin{enumerate}[ ]
\item Let $M$ be an infinite subset of prime numbers and $V$ be a neighborhood system of $S$ in the plane. Then we can construct a countable t.d.s. $(X_\alpha,T_\alpha)$ such that:
\item[ ]
\begin{enumerate}
    \item[(a)] $S\subset X_\alpha\subset V$, $d(X)=\alpha$ and $X_\alpha^{(\alpha)}=\{s_\infty\}$;
    \item[(b)] $T_\alpha\colon X_\alpha\to X_\alpha$ is a homeomorphism and $T_\alpha|_S=g\colon S\to S$;
    \item[(c)] for any $x\in X_\alpha\setminus \{s_\infty\}$, $w_S(x; T_\alpha,V)\neq 0$ and $w_S(X_\alpha\setminus S;
        T_\alpha,V)\subset M'$, where $M'=\{q_1\cdots q_t\colon q_i\in M, i=1,\dots,t, t\in\N\}$;
    \item[(d)] for each pair $x_1,x_2\in X_\alpha\setminus \{s_\infty\}$, we have either $\orb (x_1,T_\alpha)=\orb (x_2,T_\alpha)$ or $\orb (x_1,T_\alpha)\cap \orb (x_2,T_\alpha)=\emptyset$. Moreover,
    \item[ ] \begin{itemize}
        \item[(d1)] if $\orb (x_1,T_\alpha)=\orb (x_2,T_\alpha)$ then  $w_S(x_1; T_\alpha, V)= w_S(x_2;T_\alpha, V)$, and
         \item[(d2)] if $\orb (x_1,T_\alpha)\cap \orb (x_2,T_\alpha)=\emptyset$, either $w_S(x_1; T_\alpha,  V)\neq w_S(x_2; T_\alpha,V)$, or there are $n$, and only $n$, pairwise disjoint orbits, say $\orb (x_i,T_\alpha)$, $i=1,\dots,n$, such that $w_S(x_i; T_\alpha,  V)= w_S(x_j; T_\alpha,V)$ for every $1\le i, \ j\le n$;
      \end{itemize}
    \item[(e)] for any $\delta>0$, there exist $y_1,\dots, y_n\in X$ such that $\Gamma_\delta[y_j]=\{y_1,\dots,y_n\}$ for any $1\le j\le n$.
\end{enumerate}
\end{enumerate}

It turns out that $P(2)$ is just the Theorem \ref{thm:EssNExpan}.
Now assume that $\alpha=\beta+1$. We consider two cases: (i) $\beta$ is a non-limit ordinal number; (ii) $\beta$ is a limit ordinal number.

For the case (i), by induction we assume $P(\beta)$ holds and the aim is to prove $P(\alpha)$ holds too. Let $M$ and $V$ be given as described in $P(\alpha)$. Divide $M=M_0\cup M_1\cup\cdots$ with $M_i\cap M_j=\emptyset$, $i\neq j$ and $\Card (M_i)=\infty$. Also, we list $M_0$ as $\{p_1,p_2,\dots\}$. Next select a descending family of neighborhood systems of $S$, say $V=V_1\supset V_2\supset\cdots$, such that $\bigcap_{i=1}^{+\infty}V_i=S$.

For each $i\in \N$, by induction we can construct a countable t.d.s. $(X_\beta^i, T_\beta^i)$ satisfying that:
\begin{enumerate}
    \item[(a)] $S\subset X_\beta^i \subset V'_i\subset V_i$, $d(X_\beta^i)=\beta$ and $(X_\beta^i)^{(\beta)}=\{s_\infty\}$, where the well-chosen $V'_i$ is a sufficiently small neighborhood system and can be asked to meet the constraints related to $V_i$ and $p_i$ later;
    \item[(b)] $T_\beta^i\colon X_\beta^i\to X_\beta^i$ is a homeomorphism and $T_\beta^i|_S=g\colon S\to S$;
    \item[(c)] for any $x^i\in X_\beta^i\setminus \{s_\infty\}$, $w_S(x^i; T_\beta^i,V'_i)\neq 0$ and $w_S(X_\beta^i\setminus S;
        T_\beta^i,V'_i)\subset M_i^*$;
    \item[(d)] for each pair $x_1^i,x_2^i\in X_\beta^i\setminus \{s_\infty\}$, we have either $\orb (x_1^i,T_\beta^i)=\orb (x_2^i,T_\beta^i)$ or $\orb (x_1^i,T_\beta^i)\cap \orb (x_2^i,T_\beta^i)=\emptyset$. Moreover,
    \item[ ] \begin{itemize}
        \item[(d1)] if $\orb (x_1^i,T_\beta^i)=\orb (x_2^i,T_\beta^i)$ then  $w_S(x_1^i; T_\beta^i, V'_i)= w_S(x_2^i;T_\beta^i, V'_i)$, and
         \item[(d2)] $\orb (x_1^i,T_\beta^i)\cap \orb (x_2^i,T_\beta^i)=\emptyset$ then either $w_S(x_1^i; T_\beta^i,  V'_i)\neq w_S(x_2^i; T_\beta^i,V'_i)$, or there are $n$, and only $n$, pairwise disjoint orbits, say $\orb (x_j^i,T_\beta^i)$, $j=1,\dots,n$, such that $w_S(x_j^i; T_\beta^i,  V_i')= w_S(x_k^i; T_\beta^i,  V_i')$ for every $1\le j,\ k\le n$;
      \end{itemize}
    \item[(e)] for any $\delta>0$, there exist $y_1^i,\dots, y_n^i\in X_\beta^i$ such that $\Gamma_\delta[y_j^i]=\{y_1^i,\dots,y_n^i\}$ for any $1\le j\le n$.
\end{enumerate}

Note that the sequence $\{s_j\colon j\in\Z\}\subset X_{\beta}^i$ and $w_S(s_j;T_{\beta}^i, V'_i)=1$ for each $i\in\N$ and $j\neq\infty$. To ensure the $n$-expansiveness, for each $i\in \N$ by well choosing the above $V'_i$ we can find a continuous embedding $\psi^i\colon X_\beta^i\to V_i$, such that $\psi^i(s_\infty)=s_\infty$, $w_S(\{\psi^i(s_j)\colon j\in
\Z\}; V_i)=p_i$ and $w_S(\{\psi^i(x^i_j)\colon j\in\Z\}; V_i)\in p_i\cdot M_i^*$ for any point $x^i_j\in X_\beta^i\setminus S$. Set $Z_\beta^i=\psi^i(X_\beta^i)$ and we may further assume $Z_\beta^i\cap Z_\beta^j=Z_\beta^i\cap S=\{s_\infty\}$ for any $i\neq j\in \Z$. Take $X_\alpha=\bigcup_{i=1}^{+\infty}Z_\beta^i\cup S$ and define $T_\alpha\colon X_\alpha\to X_\alpha$ by $T_\alpha|_S=g$ and $T_\alpha|_{Z_\beta^i}=\psi^i\circ T_\beta^i \circ (\psi^i)^{-1}$. It is easy to check that $(X_\alpha,T_\alpha)$ meets the conditions of $P(\alpha)$ and this case ends.

As for the case (ii), since $P(\beta)$ is not true as the necessity showed, then we assume $P(\gamma)$ is true for any $\gamma<\beta$. We attend to show $P(\alpha)$ is true too. The proof of this case is similar to the above one, and for the sake of completeness we present the details. Divide $M=M_0\cup M_1\cup\cdots$ with $M_i\cap M_j=\emptyset$, $i\neq j$ and $\Card (M_i)=\infty$, where $M_0=\{p_1,p_2,\dots\}$. Pick a well-ordered sequence of non-limit ordinals $\gamma_{_1}<\gamma_{_2}<\dots$ such that $\lim_{i\to +\infty}\gamma_{_i}=\beta$. Also, pick a descending family of neighborhood systems of $S$, say $V=V_1\supset V_2\supset\cdots $, such that $\bigcap_{i=1}^{+\infty}V_i=S$.

For each $i\in \N$, by induction we can construct a countable t.d.s. $(X_{\gamma_{_i}}, T_{\gamma_{_i}})$ satisfying that:
\begin{enumerate}
    \item[(a)] $S\subset X_{\gamma_{_i}} \subset V'_i \subset V_i$, $d(X_{\gamma_{_i}})=\gamma_{_i}$ and $(X_{\gamma_{_i}})^{({\gamma_{_i}})}=\{s_\infty\}$ ($V'_i$ is similar as above);
    \item[(b)] $T_{\gamma_{_i}}\colon X_{\gamma_{_i}}\to X_{\gamma_{_i}}$ is a homeomorphism and $T_{\gamma_{_i}}|_S=g\colon S\to S$;
    \item[(c)] for any $x^i\in X_{\gamma_{_i}}\setminus \{s_\infty\}$, $w_S(x^i; T_{\gamma_{_i}},V'_i)\neq 0$ and $w_S(X_{\gamma_{_i}}\setminus S;
        T_{\gamma_{_i}},V'_i)\subset M_i'$;
    \item[(d)] for each pair $x_1^i,x_2^i\in X_{\gamma_{_i}}\setminus \{s_\infty\}$, we have either $\orb (x_1^i,T_{\gamma_{_i}})=\orb (x_2^i,T_{\gamma_{_i}})$ or $\orb (x_1^i,T_{\gamma_{_i}})\cap \orb (x_2^i,T_{\gamma_{_i}})=\emptyset$. Moreover,
    \item[ ] \begin{itemize}
         \item[(d1)] if $\orb (x_1^i,T_{\gamma_{_i}})=\orb (x_2^i,T_{\gamma_{_i}})$ then  $w_S(x_1^i; T_{\gamma_{_i}}, V'_i)= w_S(x_2^i;T_{\gamma_{_i}}, V'_i)$, and
         \item[(d2)] $\orb (x_1^i,T_{\gamma_{_i}})\cap \orb (x_2^i,T_{\gamma_{_i}})=\emptyset$ then either $w_S(x_1^i; T _{\gamma_{_i}}, V'_i)\neq w_S(x_2^i; T_{\gamma_{_i}},V'_i)$, or there are $n$, and only $n$, pairwise disjoint orbits, say $\orb (x_j^i,T_{\gamma_{_i}})$, $j=1,\dots,n$, such that $w_S(x_j^i; T_{\gamma_{_i}},  V_i')= w_S(x_k^i; T_{\gamma_{_i}},  V_i')$ for every $1\le j,\ k\le n$;
      \end{itemize}
    \item[(e)] for any $\delta>0$, there exist $y_1^i,\dots, y_n^i\in X_{\gamma_{_i}}$ such that $\Gamma_\delta[y_j^i]=\{y_1^i,\dots,y_n^i\}$ for any $1\le j\le n$.
\end{enumerate}

For each $i\in \N$, we choose a continuous embedding $\phi^i\colon X_{\gamma_{_i}}\to V_i$, such that $\phi^i(s_\infty)=s_\infty$, $w_S(\{\phi^i(s_j)\colon j\in
\Z\}; V_i)=p_i$ and $w_S(\{\phi^i(x^i_j)\colon j\in\Z\}; V_i)\in p_i\cdot M_i^*$ for any point $x^i_j\in X_{\gamma_{_i}}\setminus S$. Set $Z_{\gamma_{_i}}=\phi^i(X_{\gamma_{_i}})$ and further assume $Z_{\gamma_{_i}}\cap Z_{\gamma_{_i}}=Z_{\gamma_{_i}}\cap S=\{s_\infty\}$ for any $i\neq j\in \Z$. Take $X_\alpha=\bigcup_{i=1}^{+\infty}Z_{\gamma_{_i}}\cup S$ and define $T_\alpha\colon X_\alpha\to X_\alpha$ by $T_\alpha|_S=g$ and $T_\alpha|_{Z_{\gamma_{_i}}}=\phi^i\circ T_{\gamma_{_i}} \circ (\phi^i)^{-1}$. It is easy to check that $(X_\alpha,T_\alpha)$ fulfils the conditions of $P(\alpha)$ and we complete the whole proof.
\end{proof}

Denote $\mathcal{H}(X)$ all the homeomorphism on $X$. It is a metrizable  space with metric defined by
$$
D(T_1,T_2)=\max \{d(T_1 (x), T_2 (x))\colon x\in X\}
$$
for any $T_1,T_2\in \mathcal{H}(X)$. For every $n\in
\N$ put
$$
\mathcal{E}_n(X)=\{T\in \mathcal{H}(X)\colon T \ \mbox{ is an } n \mbox{-expanive homeomorphism}\}.
$$
Then we have a direct corollary of Theorem \ref{thm:NExpanChar}.
\begin{cor}
Let $(X,T)$ be a countable t.d.s. with $d(X)=\alpha$ and $n\in\N$. If $\alpha$ is a non-limit ordinal number and $\alpha>\aleph_0$, then $\ol{\mathcal{E}_n(X)} \neq\mathcal{H}(X)$.
\end{cor}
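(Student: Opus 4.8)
The plan is to produce a single homeomorphism $T_0\in\mathcal{H}(X)$ and a radius $\ep>0$ such that the open ball $\{T\in\mathcal{H}(X)\colon D(T,T_0)<\ep\}$ misses $\mathcal{E}_n(X)$ entirely; this gives $\ol{\mathcal{E}_n(X)}\neq\mathcal{H}(X)$. I would take $T_0=\id_X$. The device that makes this work --- it substitutes for the usual failure of $C^0$-openness of expansiveness --- is that \emph{clopen subsets of $X$ are robustly invariant}: if $V\subsetneq X$ is nonempty clopen, then $\rho:=\mathrm{dist}(V,X\setminus V)>0$ because $V$ and $X\setminus V$ are disjoint nonempty compacta, and whenever $D(T,\id_X)<\rho$ a point of $V$ cannot be displaced into $X\setminus V$, so $T(V)\subseteq V$; since $D(T^{-1},\id_X)=D(T,\id_X)<\rho$, the same argument applied to $T^{-1}$ gives $T^{-1}(V)\subseteq V$, whence $T(V)=V$. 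Thus every $T$ in the $\rho$-ball about $\id_X$ restricts to a homeomorphism of $V$, i.e.\ $(V,T|_V)$ is a subsystem of $(X,T)$.

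The core step is to locate a clopen $V\subsetneq X$ whose derived degree $d(V)$ is a \emph{limit} ordinal. Granting this, no homeomorphism of the countable compactum $V$ is $n$-expansive --- this is exactly the limit-ordinal case, established at the end of the proof of Theorem~\ref{thm:CountPosNExpan} and recalled in the proof of Theorem~\ref{thm:NExpanChar} --- so by Remark~\ref{rem:DefExpa}\eqref{rem:DefExpa:3} no $T$ with $T(V)=V$ can be $n$-expansive. To build $V$: since $\alpha=d(X)$ is non-limit and $\alpha>\aleph_0$, write $\alpha=\lambda+k$ with $\lambda$ a limit ordinal and $k\in\N$. Then $X^{(\lambda)}\neq\emptyset$ (as $\lambda\le\alpha$), and being a nonempty countable compactum it has an isolated point $p$; necessarily $p\notin X^{(\lambda+1)}$, since $X^{(\lambda+1)}=(X^{(\lambda)})^{d}$ is the set of non-isolated points of $X^{(\lambda)}$. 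Because $X$ is a countable compact metric space, hence zero-dimensional, we may choose a clopen $V$ with $p\in V$ and $V\cap X^{(\lambda)}=\{p\}$. As $V$ is clopen, a routine transfinite induction gives $V^{(\gamma)}=V\cap X^{(\gamma)}$ for every ordinal $\gamma$; hence $V^{(\lambda)}=\{p\}\neq\emptyset$ while $V^{(\lambda+1)}=V\cap X^{(\lambda+1)}=\emptyset$, so $d(V)=\lambda$, a limit ordinal. Note $V\neq X$ because $d(V)=\lambda\neq\alpha$.

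Assembling the pieces: with $V$ as above, set $\ep=\rho=\mathrm{dist}(V,X\setminus V)>0$ and $T_0=\id_X$. If $T\in\mathcal{H}(X)$ satisfies $D(T,\id_X)<\ep$, then $T(V)=V$ by the first paragraph, so $(V,T|_V)$ is a subsystem of $(X,T)$; since $d(V)$ is a limit ordinal this subsystem is not $n$-expansive, and therefore neither is $(X,T)$. Hence the $\ep$-ball about $\id_X$ avoids $\mathcal{E}_n(X)$, so $\id_X\notin\ol{\mathcal{E}_n(X)}$ and the corollary follows.

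I do not anticipate a real obstacle. The two points deserving care are: (i) the robust invariance of clopen sets, which is the one place where $C^0$-smallness of the perturbation is genuinely exploited, and where it matters that $\mathrm{dist}(V,X\setminus V)$ is a fixed positive constant depending only on $X$ and on the chosen $V$, independent of $n$; and (ii) the Cantor--Bendixson bookkeeping turning the hypothesis $d(X)=\alpha>\aleph_0$ (together with $\alpha$ non-limit) into a clopen piece of limit derived degree, for which the identity $V^{(\gamma)}=V\cap X^{(\gamma)}$ for clopen $V$ is the technical workhorse. Choosing $T_0=\id_X$ rather than an arbitrary homeomorphism fixing $V$ setwise is merely a convenience that makes the relation $D(T^{-1},T_0^{-1})=D(T,T_0)$ exact and so avoids any appeal to uniform continuity of $T_0^{-1}$.
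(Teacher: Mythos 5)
Your proof is correct and follows essentially the same route as the paper: both isolate a clopen subset $V$ of limit derived degree, observe that any $T$ with $D(T,\id_X)<\mathrm{dist}(V,X\setminus V)$ must satisfy $T(V)=V$, and then invoke the limit-ordinal case of Theorem~\ref{thm:NExpanChar} together with Remark~\ref{rem:DefExpa}\eqref{rem:DefExpa:3}. The only (immaterial) difference is that the paper takes $V$ around an isolated point of $X^{(\aleph_0)}$, giving $d(V)=\aleph_0$, whereas you take it around an isolated point of $X^{(\lambda)}$ for $\lambda$ the limit part of $\alpha$; your write-up is also somewhat more careful about why $T(V)=V$ rather than merely $T(V)\subseteq V$.
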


\begin{proof}
Choose $x\in X^{(\aleph_0)}\setminus X^{(\aleph_0+1)}$ then there is an open and closed neighborhood $U$ of $x$ with derived degree $d(U)=\aleph_0$. Let $n\in\N$ and $0<\ep<d(X\setminus U,U)=\min \{d(y,z)\colon y\in X\setminus U, z\in U)\}$. If there is a homeomorphism $T$ such that $D(T,\id)<\ep$, then $T(U)=U$. That is $U$ is closed and $T$-invariant, then $(U, T|_U)$ forms a subsystem and $d(U)=\aleph_0$. By Theorem \ref{thm:NExpanChar} we know that $T|_U$ can not be $n$-expansive for any $n\in\N$, and then neither is $T$ by Remark \ref{rem:DefExpa}\eqref{rem:DefExpa:3}.
\end{proof}

\subsection{Van der Waerden depth}
Another consequence of Theorem \ref{thm:NExpanChar} is related to the notion of Van der Waerden depth, and to describe this we first recall some notions.

Let $(X,T)$ be a t.d.s. and $x\in X$. We say $x$ is a \textit{non-wandering point} if for each neighborhood $U$ of $x$ there is $n\in\N$ with $U\cap T^{-n}U\neq\emptyset$. Write $\Omega(X,T)$ the collection of all non-wandering points. It is well known that $(\Omega(X,T),T)$ forms a subsystem of $(X,T)$, and we can consider $\Omega(\Omega(X,T),T)$ in a natural way. Note that there exists a system $(X,T)$ such that $\Omega(\Omega(X,T),T)\neq \Omega(X,T)$ (see \cite{KP99} for example). More generally, by induction we set $\Omega_0(X,T)=X$, $ \Omega_1(X,T)=\Omega(\Omega_0(X,T),T)$, $\Omega_{\alpha+1}(X,T)=\Omega(\Omega_\alpha(X,T),T)$ and
$\Omega_{\lambda}(X,T)=\bigcap_{\alpha<\lambda} \Omega_\alpha(X,T)$ if $\lambda$ is a limit ordinal number.
A well known conclusion says that descending family of closed subsets in compact metric space is always at most countable, so there exists a countable ordinal $\alpha$
satisfying $\Omega_\alpha(X,T)=\Omega_{\alpha+1}(X,T)$. Denote the \textit{depth} of $(X,T)$ as
$$\textrm{depth}(X,T)=\min \{\alpha\colon \Omega_\alpha(X,T)=\Omega_{\alpha+1}(X,T)\}$$
and call $\Omega_\alpha(X,T)$ as the \textit{Birkhoff center} of $(X,T)$. It is well known that there exists a t.d.s. $(X,T)$ with $\textrm{depth}(X,T)=\alpha$ when $\alpha$ is a countable ordinal (see for instance \cite{Ne78} and \cite[Corollary 2.7]{KP99}).

Similar as above, the authors in \cite{KLOX15} introduced multi-non-wandering points and the van der Waerden center.

We say $x$ is \textit{multi-non-wandering} if for each neighborhood $U$ of $x$ and each $d\in\N$, there
is $k\in\N$ such that
$$U\cap T^{-k}U\cap T^{-2k}U\cap\dots \cap T^{-dk}U\neq\emptyset.$$
Denote by $\Omega^{(\infty)}(X,T)$ the set of all multi-non-wandering points. It is easy to see that $(\Omega^{(\infty)}(X,T),T)$ can form a subsystem of $(\Omega(X,T),T)$. Note that there also exists a system $(X,T)$ such that $\Omega^{(\infty)}(\Omega^{(\infty)}(X,T),T)\neq \Omega^{(\infty)}(X,T)$ (see \cite[Example 6.7]{KLOX15}). Inductively we put $\Omega_0^{(\infty)}(X,T)=X$, $ \Omega_1^{(\infty)}(X,T)=\Omega^{(\infty)}(\Omega_0^{(\infty)}(X,T),T)$, $\Omega_{\alpha+1}^{(\infty)}(X,T)=\Omega^{(\infty)}(\Omega_\alpha^{(\infty)}(X,T),T)$ and $\Omega_{\lambda}^{(\infty)}(X,T)=\bigcap_{\alpha<\lambda} \Omega_\alpha^{(\infty)}(X,T)$ when $\lambda$ is a limit ordinal number. We call $\Omega_{\lambda}^{(\infty)}(X,T)$ the \textit{van der Waerden center} of $(X,T)$ if $\Omega_{\alpha}^{(\infty)}(X,T)=\Omega_{\alpha+1}^{(\infty)}(X,T)$ and denote by $\textrm{depth}(X,T)$ the \textit{van der Waerden depth} of $(X,T)$ defined as
$$\textrm{depth}^{(\infty)}(X,T)=\min \{\alpha\colon \Omega_\alpha^{(\infty)}(X,T)=\Omega_{\alpha+1}^{(\infty)}(X,T)\}.$$
The same reason as above we know $\textrm{depth}^{(\infty)}(X,T)$ is a countable ordinal number.

Denote $\aleph_1$ the first uncountable ordinal number.

\begin{thm}\label{thm:Van}
Assume $\alpha<\aleph_1$ is a non-limit ordinal number and $n\ge 2$. Then there is a countable t.d.s. $(X_\alpha,T_\alpha)$ such that
\begin{enumerate}
\item $d(X_\alpha)=\emph{depth}^{(\infty)}(X_\alpha,T_\alpha)=\emph{depth}(X_\alpha,T_\alpha)=\alpha$, and
\item $(X_\alpha,T_\alpha)$ is essentially $n$-expansive.
\end{enumerate}
\end{thm}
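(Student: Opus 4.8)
The plan is to show that the system $(X_\alpha,T_\alpha)$ already produced in the proof of Theorem~\ref{thm:NExpanChar} does everything we need: it is essentially $n$-expansive, and it is built so that $d(X_\alpha)=\alpha$ and $X_\alpha^{(\alpha)}=\{s_\infty\}$, so part~(2) and the equality $d(X_\alpha)=\alpha$ of part~(1) are free, and what remains is to compute the Birkhoff and van der Waerden hierarchies of $(X_\alpha,T_\alpha)$. I would prove, by transfinite induction on the non-limit ordinal $\alpha\ge2$, that
$$
\Omega_k(X_\alpha,T_\alpha)=\Omega_k^{(\infty)}(X_\alpha,T_\alpha)=X_\alpha^{(k)}\qquad\mbox{for every ordinal }k.
$$
Since $X_\alpha$ is countable with $d(X_\alpha)=\alpha$, the derived-set filtration strictly decreases below level $\alpha$ and then stays equal to the one-point set $\{s_\infty\}$, so this identity immediately yields $\mathrm{depth}(X_\alpha,T_\alpha)=\mathrm{depth}^{(\infty)}(X_\alpha,T_\alpha)=d(X_\alpha)=\alpha$. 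Because limit stages of the iteration are automatic (all three sequences are defined by intersections there), and the identity for the building blocks $X^i$, which have smaller derived degree, is the induction hypothesis, the whole thing reduces to the successor step: with $Y=X_\alpha^{(l)}$, show $\Omega(Y,T_\alpha)=\Omega^{(\infty)}(Y,T_\alpha)=Y^{(1)}$.

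The inclusion $\Omega^{(\infty)}(Y,T_\alpha)\subseteq Y^{(1)}$ is the easy half: the only periodic point of $T_\alpha$ is the fixed point $s_\infty$, which lies in $X_\alpha^{(\alpha)}$ and so is never isolated in any $Y=X_\alpha^{(l)}$ with $l\le\alpha$; hence each isolated point of $Y$ is non-periodic, therefore wandering in $Y$, therefore not multi-non-wandering, which gives $\Omega^{(\infty)}(Y,T_\alpha)\subseteq\Omega(Y,T_\alpha)\subseteq Y^{(1)}$. The work is the reverse inclusion: every accumulation point $y$ of $Y$ is multi-non-wandering in $Y$. Here I would use the recursive anatomy $X_\alpha=\bigcup_i Z^i\cup S$ of the construction, in which each $Z^i=\psi^i(X^i)$ is an embedded homeomorphic copy of a smaller building block $X^i$ (an $X_\beta^i$ or an $X_{\gamma_i}$), the $Z^i$ meeting each other and meeting $S$ only at $s_\infty$. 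If $y=\psi^i(z)$ with $z$ an accumulation point of the corresponding iterated derived set of $X^i$, then by the induction hypothesis applied to $X^i$ the point $z$ is multi-non-wandering in that set, and carrying this over through the conjugacy $\psi^i$ — using that $\psi^i\big((X^i)^{(l)}\big)=(Z^i)^{(l)}\subseteq Y$ and that multi-non-wandering points of a subsystem remain such in any larger subsystem — shows $y$ is multi-non-wandering in $Y$. If instead $y=s_j\in S$, I would invoke the modified winding condition: given a neighbourhood $U$ of $s_j$ in $Y$ and $d\in\N$, choose $i$ so large that the winding number $p_i\ge d+1$, that $|j|\le r_i$, that the member $U_i(s_j)$ of $V_i$ satisfies $U_i(s_j)\subseteq U$ (possible because the $V_i$ shrink to $S$ with $\diam U_i(s_j)\to0$ for each fixed $j$), and that $S\subseteq(X^i)^{(l)}$, so the whole embedded copy $\psi^i(S)$ lies inside $Y$; the $T_\alpha$-orbit of $\psi^i(s_0)$ winds $p_i$ times around $S$ with respect to $V_i$, hence meets $U_i(s_j)\subseteq U$ at $p_i$ times in arithmetic progression with common difference $k_i$, and the first of these return points $w$ satisfies $w\in U\cap T_\alpha^{-k_i}U\cap\dots\cap T_\alpha^{-d k_i}U$ inside $Y$. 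Dropping the arithmetic-progression refinement gives $\Omega(Y,T_\alpha)=Y^{(1)}$ by the same argument.

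Before running this I would record two preparatory facts, each established along the same induction: that $S\subseteq X_\gamma^{(m)}$ for every $m<d(X_\gamma)$ and every constructed block $X_\gamma$ (near $s_j$ one always finds points $\psi^i(s_{k_i+j+r_i})\in\psi^i(S)$ converging to $s_j$), and that a piece $Z^i$ attached to the rest of $X_\alpha$ only at $s_\infty$ satisfies $(Z^i)^{(m)}=X_\alpha^{(m)}\cap Z^i$. The base case $\alpha=2$ (i.e.\ Theorem~\ref{thm:EssNExpan}) is verified by hand: there $X_2^{(1)}=S$, each isolated point is wandering, each $s_j$ is multi-non-wandering through the winding copies $Y_i$, and $\Omega^{(\infty)}(S,g)=\{s_\infty\}=S^{(1)}$; at the top, once $Y$ has collapsed to $S$ or to $\{s_\infty\}$ the computation is immediate. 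I expect the main obstacle to be the bookkeeping in the reverse inclusion — making sure the embedded winding orbits used to witness multi-recurrence of a point of $S$ genuinely lie in the prescribed iterated derived set $X_\alpha^{(l)}$, and handling uniformly the two subcases ($\beta$ a successor, $\beta$ a limit) of the recursion from Theorem~\ref{thm:NExpanChar} — rather than any single hard estimate.
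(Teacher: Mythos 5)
Your proposal is correct and follows essentially the same route as the paper: take the system $(X_\alpha,T_\alpha)$ built in the proof of Theorem~\ref{thm:NExpanChar}, observe that isolated non-periodic points are wandering, and use the winding orbits (with winding numbers $p_i>d$) to witness multi-non-wandering of the points of $S$, so that the $\Omega$- and $\Omega^{(\infty)}$-hierarchies coincide with the derived-set filtration. The paper only verifies the case $\alpha=2$ and declares the general case ``similar''; your transfinite induction $\Omega_k=\Omega_k^{(\infty)}=X_\alpha^{(k)}$, together with the two preparatory facts about $S\subseteq X_\gamma^{(m)}$ and the derived sets of the attached pieces $Z^i$, is exactly the bookkeeping that claim suppresses.
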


\begin{proof}
We point out that the construction of Theorem \ref{thm:NExpanChar} is just what we desired. Here we only check the case that $\alpha=2$, and the general case is similar.

Note that $\alpha=2$ is the above Theorem \ref{thm:EssNExpan}. Let $X_2=\bigcup_{i\in\N} Y_i\cup S$ and $T_2\colon X_2\to X_2$, $T_2|_S=g\colon S\to S$, $T_2(x_{m,j}^i)=x_{m,j+1}^i$ be defined as in Theorem \ref{thm:EssNExpan}. It suffices to show
$$
\Omega_1(X_2,T_2)=\Omega_1^{(\infty)}(X_2,T_2)=S=\bigcup_{i\in\Z}\{s_i\}\cup\{s_\infty\}.
$$
To show this, we recall the decreasing family of neighborhood systems $V=V_0\supset\dots\supset V_{i}=W_0^i\supset W_1^i\supset \dots\supset W_{n-1}^i\supset V_{i+1}=W_n^i\supset\cdots$. So for any $x\in X_2\setminus S$, by the construction we have $x\in W_{m-1}^i\setminus W_m^i$ for some $m\in [1,n]$ and $i\in\N$. Let $0<\ep <d(x, S)/2$, where $d(x, S)=\min\{d(x,y)\colon y\in S\}$. Then there is $i_0\in\N$ such that $B_\ep(x)\cap V_i=\emptyset$ for all $i\ge i_0$. Hence
$
B_\ep(x)= B_\ep(x) \bigcap (\bigcup_{i=0}^{i_0} \bigcup_{m=1}^n \{x_{m,j}^i\colon j\in \Z\}).
$
On the other hand, $B_\ep(x)\bigcap \{x_{m,j}^i\colon j\in \Z\}$ is a finite set for each $m\in [1,n]$ and $i\in\N$, otherwise by compactness there is another limit point distinct with $s_\infty$, a contradiction with the assumption that $\lim_{j\to +\infty}x_{m,j}^i=\lim_{j\to -\infty}x_{m,j}^i=s_\infty$. It implies that we can choose small $0<\ep_0<\ep$ such that $B_{\ep_0}(x)=\{x\}$ is an open set, and then $x\notin \Omega_1(X_2,T_2)$. Finally the arbitrariness of $x$ yields $X_2\setminus S\cap \Omega_1(X_2,T_2)=\emptyset$.

Clearly $s_\infty\in \Omega_1^{(\infty)}(X_2,T_2)\subset \Omega_1(X_2,T_2)$. Now consider $s_j\ (j\neq\infty)$. As $\bigcap_{i=1}^{+\infty}V_i=S$ and $V_i=\bigcup_{l=-r_i}^{r_i} U_i(s_l)\bigcup U_i(s_\infty) $, there must exist some $i_1\in\N$ such that $-r_i\le j\le r_i$ and then $s_j\in U_i(s_j)$ for all $i\ge i_1$. By the construction, for each $d\in\N$ and $\delta>0$, there are $x\in X_2\setminus S$ and $i_2\in\N$ such that
the prime number $p_i>d$, $x\in  U_i(s_j)\subset B_\delta(s_j)$ and $w_S(x; T_2, V_i)=p_i$ for all $i\ge i_2$. Choose $i\ge \max\{i_1,i_2\}$ and by the definition of winding number, we have some $k\in\Z_+$ and $x'\in X_2\setminus S$ such that $T_2^k x'\in U_i(s_j), \dots,T_2^{dk} x'\in U_i(s_j),T_2^{(d+1)k} x'\in U_i(s_j),\dots,T_2^{p_i\cdot k} x'\in U_i(s_j)$. This implies that
$$
B_\delta(s_j)\cap T_2^{-k} B_\delta(s_j)\cap T_2^{-2k} B_\delta(s_j)\cap\cdots\cap T_2^{-dk} B_\delta(s_j)\neq\emptyset,
$$
and then $s_j\in \Omega_1^{(\infty)}(X_2,T_2)$. As $s_j$ is arbitrary, we have $\Omega_1(X_2,T_2)=\Omega_1^{(\infty)}(X_2,T_2)=S$, which ends the proof.
\end{proof}

In \cite[Remark 6.8]{KLOX15} the authors left a conjecture that there exists a t.d.s. $(X,T)$ such that $\textrm{depth}^{(\infty)}(X,T)=\alpha$. Here we prove that the conjecture is true.

\begin{cor}\label{cor:Van}
Assume $\alpha<\aleph_1$ is an ordinal number. Then there is a countable t.d.s. $(X_\alpha,T_\alpha)$ such that
 $$d(X_\alpha)=\emph{depth}^{(\infty)}(X_\alpha,T_\alpha)=\emph{depth}(X_\alpha,T_\alpha)=\alpha.$$
\end{cor}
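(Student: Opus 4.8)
The plan is to deduce the full statement from Theorem~\ref{thm:Van}, which already handles every non-limit ordinal $\alpha\ge 2$ (just pick some $n\ge 2$). Two degenerate cases are immediate: for $\alpha=0$ take the one-point system, whose derived degree and both depths are $0$; for $\alpha=1$ take $(S,g)$ itself, for which $S^{(1)}=\{s_\infty\}$ and $S^{(2)}=\emptyset$, so $d(S)=1$, while $s_\infty$ is fixed and every $s_j$ is wandering, hence $\Omega_1(S,g)=\Omega_1^{(\infty)}(S,g)=\{s_\infty\}$ is already stable and $\mathrm{depth}(S,g)=\mathrm{depth}^{(\infty)}(S,g)=1$. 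Thus the only remaining case is that in which $\alpha<\aleph_1$ is a \emph{limit} ordinal, and that is what must be proved.

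Fix such a limit $\alpha$ and choose non-limit ordinals $\gamma_1<\gamma_2<\cdots$ with $\gamma_i\ge 2$ and $\sup_i\gamma_i=\alpha$. For each $i$ let $(X_{\gamma_i},T_{\gamma_i})$ be the countable planar t.d.s.\ furnished by Theorem~\ref{thm:Van}; recall it has a fixed point $s_\infty$, satisfies $d(X_{\gamma_i})=\mathrm{depth}(X_{\gamma_i},T_{\gamma_i})=\mathrm{depth}^{(\infty)}(X_{\gamma_i},T_{\gamma_i})=\gamma_i$, and (as in the verification for $\alpha=2$ inside the proof of Theorem~\ref{thm:Van}, and as built into the constructions of Theorems~\ref{thm:EssNExpan} and~\ref{thm:NExpanChar}) has its top derived set $X_{\gamma_i}^{(\gamma_i)}$, its Birkhoff center and its van der Waerden center all equal to $\{s_\infty\}$. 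Place into the plane pairwise disjoint (except at $s_\infty$) homeomorphic copies $Y_i$ of $X_{\gamma_i}$ with $Y_i\cap Y_j=\{s_\infty\}$ for $i\ne j$ and $H_d(Y_i,\{s_\infty\})\to 0$, set $X_\alpha=\bigcup_{i\in\N}Y_i$, and let $T_\alpha$ be the map restricting on each $Y_i$ to the transported copy of $T_{\gamma_i}$ (these restrictions agree at $s_\infty$, which each of them fixes). As in Theorem~\ref{thm:EssNExpan} one checks $X_\alpha$ is a countable compact metric space and $T_\alpha$ a homeomorphism, so $(X_\alpha,T_\alpha)$ is a t.d.s.

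It remains to compute the three invariants. The crucial point is a \emph{localization} observation: since $Y_i\cap Y_j=\{s_\infty\}$ and $H_d(Y_i,\{s_\infty\})\to 0$, the closure of $\bigcup_{j\ne i}Y_j$ meets $Y_i$ only at $s_\infty$, so each $Y_i\setminus\{s_\infty\}$ is open in $X_\alpha$; hence a point $x\in Y_i\setminus\{s_\infty\}$ has arbitrarily small neighbourhoods lying inside $Y_i$, and therefore ``$x$ is an accumulation point of $X_\alpha$'', ``$x\in\Omega(X_\alpha,T_\alpha)$'' and ``$x\in\Omega^{(\infty)}(X_\alpha,T_\alpha)$'' each hold if and only if the corresponding property holds inside $(Y_i,T_{\gamma_i})$. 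A transfinite induction (with the obvious treatment of the limit levels via the identity $\bigcap_{\gamma<\lambda}\bigcup_i Y_i^{(\gamma)}=\bigcup_i\bigcap_{\gamma<\lambda}Y_i^{(\gamma)}$, valid because distinct pieces meet only at $s_\infty$) then shows that for every $\gamma$ with $1\le\gamma<\alpha$ the sets $X_\alpha^{(\gamma)}$, $\Omega_\gamma(X_\alpha,T_\alpha)$ and $\Omega_\gamma^{(\infty)}(X_\alpha,T_\alpha)$ each equal the union over $i$ of the corresponding $\gamma$-th iterate of $Y_i$. Since $\gamma_i\to\alpha$, for every $\gamma<\alpha$ some $\gamma_i>\gamma$ and the piece $Y_i$ still contributes points besides $s_\infty$, so all three families are strictly decreasing below $\alpha$; and at level $\alpha$ each piece has already stabilized to $\{s_\infty\}$ at its own level $\gamma_i<\alpha$, so $X_\alpha^{(\alpha)}=\Omega_\alpha(X_\alpha,T_\alpha)=\Omega_\alpha^{(\infty)}(X_\alpha,T_\alpha)=\{s_\infty\}$, which is fixed and hence stable. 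Therefore $d(X_\alpha)=\mathrm{depth}(X_\alpha,T_\alpha)=\mathrm{depth}^{(\infty)}(X_\alpha,T_\alpha)=\alpha$.

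I expect the main obstacle to be making this localization and the attendant transfinite bookkeeping at the gluing point $s_\infty$ precise: one must rule out any spurious non-wandering or multi-non-wandering behaviour created at $s_\infty$ by the infinitely many pieces accumulating there, and one must invoke the fact that the building blocks $(X_{\gamma_i},T_{\gamma_i})$ have top derived set, Birkhoff center and van der Waerden center all reduced to the single point $\{s_\infty\}$. Both facts are present in the constructions underlying Theorems~\ref{thm:EssNExpan}, \ref{thm:NExpanChar} and~\ref{thm:Van}, but they must be quoted with care.
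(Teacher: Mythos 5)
Your proof is correct and follows essentially the same route as the paper: reduce to Theorem~\ref{thm:Van} for non-limit ordinals, and for a limit ordinal $\alpha$ glue countably many systems of depths $\gamma_i\nearrow\alpha$ so that they accumulate at a single fixed point, then check that derived sets, non-wandering sets and multi-non-wandering sets localize to the pieces. The only cosmetic difference is that the paper keeps the pieces pairwise disjoint and lets them shrink to a fresh fixed point $x_0$, whereas you identify their common fixed point $s_\infty$; your explicit treatment of $\alpha=0,1$ and of the localization at the gluing point supplies details the paper leaves as ``easy to see''.
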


\begin{proof}
By Theorem \ref{thm:Van} it remains to show the case that $\alpha$ is a limit ordinal number. Choose a sequence of non-limit ordinals $\alpha_1<\alpha_2<\dots<\alpha$ with $\lim_{i\to +\infty}\alpha_i=\alpha$. For each $i\in\N$, by Theorem \ref{thm:Van} there exists a countable t.d.s. $(X_{\alpha_i},T_{\alpha_i})$ in the plane such that  $d(X_{\alpha_i})=\textrm{depth}^{(\infty)}(X_{\alpha_i},T_{\alpha_i})=\textrm{depth}(X_{\alpha_i},T_{\alpha_i})={\alpha_i}$ and
$\Omega_{\alpha_i}(X_{\alpha_i},T_{\alpha_i})=\Omega_{\alpha_i}^{(\infty)}(X_{\alpha_i},T_{\alpha_i})$. We can further require that the sequence $\{X_{\alpha_i} \}_{i=1}^{+\infty}$ are pairwise disjoint and $\lim_{i\to +\infty}H_d(X_{\alpha_i}, \{x_0\})=0$ for some point $x_0$ in the plane with $x_0\notin X_{\alpha_i}$. Set $X_\alpha=\bigcup_{i=1}^{+\infty}X_{\alpha_i} \cup \{x_0\}$ and $T_\alpha\colon X_\alpha\to X_\alpha$ as $T_\alpha|_{X_{\alpha_i}}=T_{\alpha_i},\ T_\alpha(x_0)=x_0$. It is easy to see that $(X_\alpha, T_\alpha)$ is what we want.
\end{proof}

\subsection{Essential $\aleph_0$-expansiveness} To end this paper, we give an example of essentially $\aleph_0$-expansive homeomorphism.

\begin{exam}\label{exam:CoutExpan}
There exists a t.d.s. $(X,T)$ such that $T$ is essentially $\aleph_0$-expansive.

\smallskip
Let $X=\{0\} \cup \left\{\frac{1}{n}: n \in \mathbb{N} \right\}$ with the subspace topology of the real line $\mathbb{R}$. Define $T: X \rightarrow X$ as
\begin{itemize}
 \item $T(0)=0$ and $T(1)=1$;
 \item $T\left(\frac{1}{2^n}\right)=\frac{1}{2^n+1}, \ldots, T\left(\frac{1}{2^{n+1}-1}\right)=\frac{1}{2^n}$ for each $n \in \mathbb{N}$.
\end{itemize}
Note that for any $\delta>0$ the set $\{y\in X\colon d(T^n 0, T^n y)\le\delta, \forall n\in \Z \}$ is countable infinite. That is what we need.
\end{exam}

%

\section*{Acknowledgments}
We thank Siming Tu for very useful suggestions.

J. Li was supported by NNSF of China (11371339) and R. Zhang
was supported by NNSF of China (11001071,11171320).



\begin{thebibliography}{s2}

\bibitem{Ar14}
A.~Artigue, \emph{Finite sets with fake observable cardinality}, arXiv:1404.0590v1 [math.DS], 2014.

\bibitem{AC14}
A.~Artigue and D.~ Carrasco-Olivera, \emph{A Note on Measure-Expansive Diffeomorphisms}, arXiv:1409.1418v1 [math.DS], 2014.

\bibitem{APV13}
A.~Artigue, M.~J.~Pacifico, J.~L.~Vieitez, \emph{N-expansive homeomorphisms on surfaces}, arXiv:1311.5505v1 [math.DS], 2013.



\bibitem{De89}
R.~Devaney, \emph{Chaotic Dynamical Systems}, Addison-Wesley, Reading MA, 1989.

\bibitem{Ei66}
M.~Eisenberg, \emph{Expansive transformation semigroups of endomorphisms}, Fund. Math. \textbf{59}(1966), 313--321.


\bibitem{HW48}
W.~Hurewicz and H.~Wallman, \emph{Dimension theory}, Princeton Univ. Press, Princeton, N.J., 1948.


\bibitem{Ka93}
H.~Kato, \emph{Continuum-wise expansive homeomorphisms}, Canad. J. Math \textbf{45}(1993), no. 3, 576--598.

\bibitem{KP99}
H.~Kato and J.~J.~Park, \emph{Expansive homeomorphisms of countable compacta}, Topology and its Applications \textbf{95}(1999), no. 3, 207--216.

\bibitem{KR69}
H.~B.~Keynes and J.~B.~Robertson, \emph{Generators for topological entropy and expansiveness}, Theory of Computing Systems \textbf{3}(1969), no.1, 51--59.

\bibitem{Ku68}
K.~Kuratowski, \emph{Topology}, vol. I, 1968.

\bibitem{KLOX15}
D.~Kwietniak, J.~Li, P.~Oprocha and X.~D.~Ye, \emph{Multi-recurrence and van der Waerden systems}, preprint (2015).

\bibitem{MS10}
J.~H.~Mai and W.~H.~Sun, \emph{Positively expansive homeomorphism on metric space}, Acta Math. Hungar. \textbf{126}(2010), no. 4, 366--368.

\bibitem{Ma79}
R.~Ma\~n\'e, \emph{Expansive homeomorphisms and topological dimension}, Transactions of the American Mathematical Society \textbf{252}(1979), 313--319.

\bibitem{Mo12}
C.~Morales, \emph{A generalization of expansivity}, Discrete and Continuous Dynamical Systems \textbf{32}(2012), no. 1, 293--301.

\bibitem{Ne78}
D.~A.~Neumann, \emph{Central sequences in dynamical systems}, Amer. J. Math. \textbf{100}(1978), 1--18.

\bibitem{Re65}
W.~Reddy, \emph{The existence of expansive homeomorphisms on manifolds}, Duke Math. J. \textbf{32}(1965), 627--632.


\bibitem{Ut50}
W.~R.~Utz, \emph{Unstable homeomorphisms}, Proc. Amer. Math. Soc. \textbf{1}
(1950), 769--774.

\bibitem{Wa00}
P.~Walters, \emph{An introduction to ergodic theory}, Grad.Texts in Math., Springer, 2000.

\end{thebibliography}
\end{document}